\author[Jian Liu]{Jian Liu}
\address{School of Mathematics and Statistics, and Hubei Key Laboratory of Mathematical Sciences,  Central China Normal University,  Wuhan 430079, P.R. China}
\email{jianliu@ccnu.edu.cn}
\keywords{(Gorenstein) transpose, torsionfree modules, extension closed, quasi $k$-Gorenstein, quasi-faithfully flat extensions, Frobenius extensions}
\subjclass[2020]{16E05 (primary); 16D90, 16E10 (secondary)}
\DeclareMathOperator{\depth}{depth}
\DeclareMathOperator{\h}{H}
\newcommand{\Z}{\mathbb{Z}}
\newcommand{\D}{\mathsf{D}}
\newcommand{\p}{\mathfrak{p}}
\DeclareMathOperator{\Tr}{Tr}
\DeclareMathOperator{\Coker}{coker}
\DeclareMathOperator{\pd}{pd}
\DeclareMathOperator{\Gpd}{Gpd}
\DeclareMathOperator{\add}{add}
\DeclareMathOperator{\id}{id}
\DeclareMathOperator{\TF}{TF}
\DeclareMathOperator{\Hom}{Hom}
\DeclareMathOperator{\Ext}{Ext}
\DeclareMathOperator{\Tor}{Tor}
\DeclareMathOperator{\G}{G}
\DeclareMathOperator{\RHom}{\mathsf{RHom}}
\newcommand{\mo}{\mathsf{mod}}
\newcommand{\proj}{\mathsf{proj}}
\newcommand{\Gproj}{\mathsf{Gproj}}
\newtheorem{theorem}{Theorem}[section]
\newtheorem{proposition}[theorem]{Proposition}
\newtheorem{lemma}[theorem]{Lemma}
\newtheorem{corollary}[theorem]{Corollary}
\theoremstyle{definition}
\newtheorem{example}[theorem]{Example}
\newtheorem{remark}[theorem]{Remark}
\newtheorem{definition}[theorem]{Definition}
\newtheorem{chunk}[theorem]{}
\newtheorem*{ack}{Acknowledgements}
\title[(Gorenstein) transpose and $k$-torsionfree modules]{Base change of  (Gorenstein) transpose, $k$-torsionfree modules, and quasi-faithfully flat extensions}
\begin{document}
\maketitle
\begin{abstract}
Let $\varphi\colon R \rightarrow A$ be a finite ring homomorphism, where $R$ is a two-sided Noetherian ring, and let $M$ be a finitely generated left $A$-module. Under suitable homological conditions on $A$ over $R$,   we establish a close relationship between the classical transpose of $M$ over $A$ and the Gorenstein transpose of a certain syzygy module of $M$ over $R$. As an application, for each integer $k>0$, we provide a sufficient condition under which $M$ is $k$-torsionfree over $A$ if and only if a certain syzygy of $M$ over $R$ is $k$-torsionfree over $R$,  extending a result of Zhao.
We introduce the notion of quasi-faithfully flat extensions and show that, under suitable assumptions, the extension closedness of the category of $k$-torsionfree modules over $R$ is equivalent to that over $A$.  An application is an affirmative answer to a question posed by Zhao concerning quasi $k$-Gorensteiness, in the case where both $R$ and $A$ are Noetherian algebras.
Finally, when $\varphi$ is a separable split Frobenius extension, it is proved that the category of $k$-torsionfree $R$-modules has finite representation type if and only if the same holds over $A$, with applications to skew group rings.
\end{abstract}
\section{Introduction}
The study of Gorenstein homological algebra has become a central topic in modern representation theory and commutative algebra, offering powerful frameworks for investigating homological properties; see \cite{Auslander-Bridger, Christensen-book, EJ}. Among these tools, the concepts of transpose and $k$-torsionfree modules play a fundamental role, especially within the framework of Auslander–Reiten theory; see \cite{Auslander-Bridger, ARS, Iyama2007}. Meanwhile, ring extensions, such as Frobenius extensions (cf. \Cref{def of Frobenius}) and their generalizations, provide a natural framework to examine how these homological properties are affected by changes of rings.

 Buchweitz \cite[8.2]{Buchweitz} observed that for the integral group ring extension \( \mathbb{Z} \to \mathbb{Z}G \) of a finite group $G$, a finitely generated left \( \mathbb{Z}G \)-module is Gorenstein projective if and only if its underlying \( \mathbb{Z} \)-module is Gorenstein projective, equivalently, free as a \( \mathbb{Z} \)-module. The ring extension $\Z\rightarrow \Z G$ is a classical example of a Frobenius extension \cite{Kas}. Building on this, Chen \cite{Chen} introduced a generalization of Frobenius extensions and proved that finitely generated Gorenstein projective modules transfer well along such extensions. This motivated Ren \cite{Ren-SCM} and Zhao \cite{Zhao2019} to study arbitrary (not necessarily finitely generated) Gorenstein projective modules within the framework of Frobenius extensions. Furthermore, the author and Ren \cite{Liu-Ren} systematically studied the ascent and descent of Gorenstein homological properties, providing equivalent characterizations of such properties for a ring homomorphism $\varphi\colon R \to A$ under mild assumptions.

Recently, Zhao \cite{Zhao2024} established an interesting result: 
    \emph{for a Frobenius extension $\varphi\colon R \rightarrow A$, a finitely generated left $A$-module is $k$-torsionfree if and only if it is $k$-torsionfree as a left $R$-module.} 
    Recall that a finitely generated left $R$-module $M$ is $k$-torsionfree if  $\Ext^i_R(\Tr_R(M), R) = 0$ for all $1 \leq i \leq k$, where $\Tr_R(M)$ denotes the transpose of $M$ over $R$ (cf. \ref{def-torsionfree}).

    
    In Section \ref{Section 3}, we investigate how the transpose behaves along ring homomorphisms. Let $M$ be a finitely generated left $A$-module. Our first main result, \Cref{T1}, establishes a close connection between the classical transpose of $M$ over $A$, denoted by $\Tr_A(M)$, and the Gorenstein transpose (cf. \ref{Gorenstein transpose}) of the $n$-th syzygy of $M$ over $R$, denoted by $\Tr_R^G(\Omega^n_R(M))$, introduced by Huang and Huang \cite{Huang-Huang} under change of rings. We point out that it is quite different from the characterization of the Gorenstein transpose via the classical transpose in loc. cit., as well as from the result presented in \cite{Zhao-Sun}; see discussions in \Cref{compare}. 
Theorem \ref{T1} provides a direct approach to Zhao's result \cite{Zhao2024}, in contrast to Zhao’s original proof, which relies on the characterization of $k$-torsionfree modules via projective approximations, as established by Auslander and Bridger \cite{Auslander-Bridger}; see \Cref{C1}. 
\begin{theorem}[See \ref{essential}]\label{T1}
Let $\varphi\colon R \to A$ be a finite ring homomorphism, where $R$ is a Noetherian ring. Suppose that $A$, viewed as a left $R$-module, has finite Gorenstein projective dimension, and that there exists  $n \geq 0$ such that $\operatorname{Ext}^i_R(A, R) = 0$ for all $i \neq n$. Then for each finitely generated left $A$-module $M$ and the transpose $\Tr_A(M)$, there exists a short exact sequence
\[
0 \longrightarrow K \longrightarrow \operatorname{Tr}^G_R(\Omega^n_R(M)) \longrightarrow \operatorname{Tr}_A(M) \otimes_A \operatorname{Ext}^n_R(A, R) \longrightarrow 0
\]
 of right $R$-modules, where the projective dimension of $K$ is at most $n - 1$.
\end{theorem}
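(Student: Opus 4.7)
The plan is to dualize a specific Gorenstein projective presentation of $\Omega^n_R(M)$ constructed from an $A$-projective presentation of $M$, and then compare the outcome with the tensored classical $A$-transpose sequence via the snake lemma. The Ext-vanishing hypothesis and finiteness of $\Gpd_R A$ force $\Gpd_R A \le n$, so every f.g.\ projective $A$-module $P$ satisfies $\Gpd_R P \le n$ together with $\Ext^n_R(P, R) \cong P^* \otimes_A E$ (where $P^* := \Hom_A(P, A)$ and $E := \Ext^n_R(A, R)$) and $\Ext^i_R(P, R) = 0$ for $i \ne n$. Dualizing an $R$-projective resolution $T^P_\bullet \to P$ and using this vanishing yields the explicit exact sequence
\[
0 \to T^{P, *}_0 \to T^{P, *}_1 \to \cdots \to T^{P, *}_{n-1} \to \Omega^n_R(P)^* \to P^* \otimes_A E \to 0,
\]
which in particular presents the kernel $N_P := \Ker(\Omega^n_R(P)^* \twoheadrightarrow P^* \otimes_A E)$ by projectives in degrees $0, \ldots, n-1$, so $\pd_R N_P \le n - 1$.

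Given an $A$-projective presentation $P_1 \to P_0 \to M \to 0$, I would take compatible horseshoe $R$-projective resolutions to build a right-exact sequence $\Omega^n_R(P_1) \to \Omega^n_R(P_0) \to \Omega^n_R(M) \to 0$ in which both $\Omega^n_R(P_i)$ are $R$-Gorenstein projective; this is the Gorenstein projective presentation used to compute $\Tr^G_R(\Omega^n_R(M))$. The surjections $\Omega^n_R(P_i)^* \twoheadrightarrow P_i^* \otimes_A E$ then fit into a commutative diagram with right-exact rows (the dualized Gorenstein projective presentation and the tensored classical $A$-transpose sequence) whose left and middle columns are the short exact sequences $0 \to N_{P_i} \to \Omega^n_R(P_i)^* \to P_i^* \otimes_A E \to 0$. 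The hypothesis that $R$ is commutative for $n \ge 1$ (or vacuously when $n = 0$) ensures a compatible right $R$-module structure on $\Tr_A(M) \otimes_A E$. A snake-lemma / diagram chase then yields the short exact sequence $0 \to K \to \Tr^G_R(\Omega^n_R(M)) \to \Tr_A(M) \otimes_A E \to 0$, with $K$ expressed as a subquotient of $\Coker(N_{P_0} \to N_{P_1})$ determined by the induced map and the connecting homomorphism.

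The main obstacle is the sharp projective-dimension bound $\pd_R K \le n - 1$: a direct count from the snake-lemma description, together with $\pd_R N_{P_i} \le n - 1$, only produces $\pd_R K \le n$. To cut the estimate down by one I would exploit the chain map $T^{P_0, *}_\bullet \to T^{P_1, *}_\bullet$ induced by dualizing a lift of $P_1 \to P_0$: the two length-$(n-1)$ coresolutions of the $N_{P_i}$ assemble into a double complex whose appropriate total cohomology realizes $K$ as the cokernel of a map of projective modules concentrated in degrees $0, \ldots, n-1$. Equivalently, the image of the connecting homomorphism in the snake sequence should absorb the top-degree term of the mapping cone of $N_{P_0} \to N_{P_1}$, trimming the naive length-$n$ resolution of $K$ to length $n - 1$. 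Making this cancellation precise is the delicate point, and it is where the Gorenstein projectivity of $\Omega^n_R(P_i)$ (which makes the $\Hom_R(-, R)$-dual of the Gorenstein projective presentation exact in the relevant degrees) and the full force of the Ext-vanishing hypothesis must be used.
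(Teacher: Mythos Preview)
Your framework---take an $A$-projective presentation $P_1\to P_0\to M\to 0$, build compatible $R$-projective resolutions by the Horseshoe lemma, dualize, and apply the snake lemma---matches the paper's. You also correctly isolate the real difficulty: the sharp bound $\pd_R K\le n-1$. However, your proposed fix (a double-complex/mapping-cone ``trimming'' argument in which the connecting homomorphism is supposed to absorb the top-degree term) is too vague to stand as a proof; the mapping cone of the chain map between the length-$(n-1)$ resolutions of $N_{P_0}$ and $N_{P_1}$ has length $n$, and nothing you have written forces a cancellation. Moreover, you have not verified that $K$ actually equals $\Coker(N_{P_0}\to N_{P_1})$ rather than a proper quotient of it: for that you need the induced map $\Ker(g)\to\Ker(h)$ to be surjective, which your two-column diagram does not supply.

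The paper resolves both issues at once by extending the Horseshoe to the four-term exact sequence $0\to\Omega^2_A(M)\to P^1\to P^0\to M\to 0$, thereby adding a fourth column. The key observation you are missing is that $\Omega^2_A(M)$ is again an object of $\mo(A)$, so the hypothesis $\Ext^i_R(A,R)=0$ for $i<n$ forces $\Ext^i_R(\Omega^2_A(M),R)=0$ for all $i<n$ (this is Lemma~3.3(1) in the paper). Hence the dualized $\Omega^2_A(M)$-column
\[
0\to (P^2_0)^\ast\to (P^2_1)^\ast\to\cdots\to (P^2_{n-1})^\ast
\]
is exact, and its cokernel at degree $n-1$ is precisely $\Tr_R(\Omega^{n-2}_R(\Omega^2_A(M)))$, which therefore has projective dimension at most $n-1$ \emph{directly}. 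A four-term snake lemma on the cokernel row then identifies $K$ with this module, and the first column simultaneously shows that the map $\Ker(g)\to\Ker(h)=\Ext^n_R(M,R)$ is surjective (it factors through the surjection $(\Omega^n_R(M))^\ast\twoheadrightarrow\Ext^n_R(M,R)$), so the snake lemma yields the desired short exact sequence of cokernels. No mapping-cone trimming is needed.
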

In the above result, a module with projective dimension at most $-1$ is, by convention, the zero module. 

As an application of Theorem \ref{T1}, we obtain Corollary \ref{C1}, which generalizes Zhao’s result \cite{Zhao2024} on $k$-torsionfreeness. Specifically, Zhao established the case $n = 0$ in \Cref{C1}(1) under a stronger assumption that $\varphi$ is a Frobenius extension.
 It is worth emphasizing that a Frobenius extension requires $A$ to be projective as both a left and right $R$-module, and $\Hom_R(A, R)$ to be projective as a left $A$-module. In contrast, our assumptions are significantly weaker and are satisfied by a broader class of ring homomorphism, such as Frobenius extensions, homomorphisms between Gorenstein local rings, complete intersection maps; see \Cref{Section 4} for further details.

\begin{corollary}[See \ref{AD}]\label{C1}
Let $\varphi\colon R \to A$ be a finite ring homomorphism, where $R$ is a Noetherian ring. Suppose that $A$, viewed as a left $R$-module, has finite Gorenstein projective dimension, and that there exists $n \geq 0$ such that
$\Ext^i_R(A, R) = 0$ for all $i \neq n$,and $\Ext^n_R(A, R)$ is projective as a left $A$-module.
Then for each finitely generated left $A$-module $M$ and $k > 0$, we have{\rm :}
\begin{enumerate}
    \item If $n = 0$, then $M$ is $k$-torsionfree over $A$ if and only if it is $k$-torsionfree over $R$.

    \item Suppose $n \geq 1$ and $R$ is commutative. If the $n$-syzygy $\Omega^n_R(M)$ of $M$ over $R$ is $(k+n)$-torsionfree over $R$, then $M$ is $k$-torsionfree over $A$. The converse holds if $R_\p$ is Gorenstein for all prime ideals $\p$ of $R$ with $\depth(R_\p) \leq n - 1$.
\end{enumerate}
\end{corollary}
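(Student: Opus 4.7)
The plan is to derive both parts from Theorem \ref{T1} by analyzing the associated long exact sequence of $\Ext^*_R(-,R)$. Writing $P = \Ext^n_R(A, R)$, two auxiliary tools drive the argument: (i) for the Gorenstein transpose, $\Ext^i_R(\Tr^G_R(N), R) \cong \Ext^i_R(\Tr_R(N), R)$ for all $i \geq 1$, so $k$-torsionfreeness over $R$ is detected through $\Tr^G_R$; and (ii) a base-change identification
\[
\Ext^i_R(\Tr_A(M) \otimes_A P, R) \;\cong\; \Ext^{i - n}_A(\Tr_A(M), A) \qquad (i \geq n).
\]
To obtain (ii) I would resolve $\Tr_A(M)$ projectively over $A$, tensor with $P$ (exact by $A$-projectivity of $P$), apply the Hom--tensor adjunction $\Hom_R(- \otimes_A P, R) \cong \Hom_A(-, \Hom_R(P, R))$, and invoke the double-Ext duality $\Ext^q_R(P, R) = 0$ for $q \neq n$ and $\Ext^n_R(P, R) \cong A$; this duality is a consequence of $A$ having finite Gorenstein projective dimension over $R$ with $\Ext^*_R(A, R)$ concentrated in degree $n$, via a Grothendieck spectral sequence that collapses to a single column.

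For part (1) with $n = 0$, the module $K$ in Theorem \ref{T1} has projective dimension at most $-1$ and therefore vanishes, giving $\Tr^G_R(M) \cong \Tr_A(M) \otimes_A \Hom_R(A, R)$. Combining (i) and (ii) yields $\Ext^i_R(\Tr_R(M), R) \cong \Ext^i_A(\Tr_A(M), A)$ for all $i \geq 1$, whence $M$ is $k$-torsionfree over $A$ if and only if it is $k$-torsionfree over $R$. The required duality $\Hom_R(\Hom_R(A, R), R) \cong A$ reduces here to the classical reflexivity of the Gorenstein projective $R$-module $A$ (forced by the hypotheses), so no additional condition on $R$ is needed.

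For part (2), the bound $\pd_R(K) \leq n - 1$ gives $\Ext^j_R(K, R) = 0$ for all $j \geq n$. The long exact sequence of $\Ext^*_R(-, R)$ applied to $0 \to K \to \Tr^G_R(\Omega^n_R(M)) \to \Tr_A(M) \otimes_A P \to 0$, combined with (i) and (ii), produces
\[
\Ext^i_R(\Tr_R(\Omega^n_R(M)), R) \;\cong\; \Ext^{i - n}_A(\Tr_A(M), A) \qquad (i \geq n + 1).
\]
The forward implication is then immediate: if $\Omega^n_R(M)$ is $(k + n)$-torsionfree over $R$, reading off the range $n + 1 \leq i \leq k + n$ gives $\Ext^j_A(\Tr_A(M), A) = 0$ for $1 \leq j \leq k$, i.e.\ $M$ is $k$-torsionfree over $A$.

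For the converse, the same identification covers the high-degree range $n + 1 \leq i \leq k + n$, so the remaining task is to force $\Ext^i_R(\Tr_R(\Omega^n_R(M)), R) = 0$ for $1 \leq i \leq n$---equivalently, to show that $\Omega^n_R(M)$ is automatically $n$-torsionfree over $R$. This is where the Gorenstein-local hypothesis enters: under the assumption that $R_\p$ is Gorenstein for all primes $\p$ of $R$ with $\depth(R_\p) \leq n - 1$, it is a classical fact in the spirit of Auslander's $n$-Gorenstein rings that every $n$-th syzygy of a finitely generated $R$-module is automatically $n$-torsionfree; together with the high-degree vanishing this gives $(k + n)$-torsionfreeness of $\Omega^n_R(M)$. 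The main obstacle I anticipate is verifying the double-Ext duality $\Ext^n_R(P, R) \cong A$ underlying tool (ii)---the linchpin of the spectral-sequence collapse---since it is what converts the mild homological hypotheses on $A$ into a computable identification; once this is in hand, the remainder of the argument is organized bookkeeping in the long exact sequence together with the syzygy-torsionfreeness fact just recalled.
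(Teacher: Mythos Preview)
Your proposal is correct and follows essentially the same route as the paper: the paper packages your tool (ii) and the double-Ext duality as separate lemmas (proved via derived-category reflexivity $A \cong \RHom_R(\RHom_R(A,R),R)$ and the derived adjunction $(-\otimes^{\mathrm L}_A P, \RHom_R(P,-))$ rather than a Grothendieck spectral sequence, but these are equivalent formulations), then combines them with Theorem~\ref{T1} and the Gorenstein-transpose comparison exactly as you do; for the converse in part (2) it invokes the Matsui--Takahashi--Tsuchiya theorem that under $(\widetilde{G}_{n-1})$ every $n$-th syzygy is $n$-torsionfree, which is precisely the ``classical fact'' you cite.
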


We also investigate the (left) quasi $k$-Gorenstein rings introduced by Huang \cite{Huang1999}. As shown by Huang, for a Noetherian algebra, the quasi $k$-Gorensteinness can be characterized by the extension closedness of the category of $k$-torsionfree modules. Recently, Dey and Takahashi \cite{DT} observed that, for a commutative Noetherian ring $R$ with depth $t$, the Gorensteinness of $R$ can be detected by the extension closedness of the category of $(t+1)$-torsionfree modules.

In Section \ref{Section 5}, we introduce the notion of \emph{quasi-faithfully flat extensions} (cf. \Cref{def of quasi-faithfully flat extension}) and establish \Cref{T2} concerning the extension closedness of the category of $k$-torsionfree modules under change of rings. 
We also prove that if $R$ is a Frobenius algebra over a field $k$, then any $k$-algebra homomorphism $
\varphi\colon R \rightarrow A$ is a quasi-faithfully flat extension; see \Cref{example-weak-ff}.
When the ring homomorphism $\varphi$ is further assumed to be a Frobenius extension, the forward implication of \Cref{T2} was previously obtained by Zhao \cite{Zhao2024}. It is worth mentioning that the assumption of \Cref{T2} does not necessarily imply that $A$ is projective as an $R$-module; see \Cref{Example-not proj}. 
\begin{theorem}[See \ref{AD of extension closedness}]\label{T2}
 Let $\varphi\colon R \rightarrow A$ be a finite ring homomorphism such that $\varphi$ is quasi-faithfully flat, where $R$ is a Noetherian ring.  Suppose that $A$, viewed as a left $R$-module, is Gorenstein projective, and $\Hom_R(A,R)$ is projective as a left $A$-module. Then for each $k > 0$, the category of $k$-torsionfree left $R$-modules is extension closed if and only if so is the category of $k$-torsionfree left $A$-modules.
\end{theorem}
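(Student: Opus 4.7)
The plan is to reduce both implications to the module-level equivalence of $k$-torsionfreeness furnished by \Cref{C1}(1), and then handle ascent and descent of extension closedness separately. Since $A$ is Gorenstein projective as a left $R$-module, one has $\Ext^i_R(A, R) = 0$ for all $i \geq 1$, so the hypotheses of \Cref{C1}(1) are met with $n = 0$: for each finitely generated left $A$-module $M$ and each $k > 0$, $M$ is $k$-torsionfree over $A$ if and only if it is $k$-torsionfree over $R$. This equivalence will serve as the bridge for both directions.

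For the forward direction, I would take a short exact sequence $0 \to X \to Y \to Z \to 0$ of finitely generated left $A$-modules with $X$ and $Z$ being $k$-torsionfree over $A$. The equivalence makes them $k$-torsionfree over $R$; since $\varphi$ is finite, all three terms are finitely generated over $R$, and the assumed extension closedness over $R$ forces $Y$ to be $k$-torsionfree over $R$. The equivalence then returns $Y$ to being $k$-torsionfree over $A$.

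For the backward direction, I would begin with a short exact sequence $0 \to X \to Y \to Z \to 0$ of finitely generated $R$-modules with $X$ and $Z$ being $k$-torsionfree, and apply the natural base change $A \otimes_R -$. Because $A$ is Gorenstein projective over $R$, the vanishing $\Tor_{\geq 1}^R(A,-)=0$ on the appropriate classes should yield a short exact sequence of finitely generated $A$-modules whose outer terms are $k$-torsionfree over $A$; extension closedness over $A$ then delivers $A \otimes_R Y$ as $k$-torsionfree over $A$. The quasi-faithful flatness of $\varphi$ is then invoked to descend this conclusion to $Y$ itself being $k$-torsionfree over $R$.

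The main obstacle will be this backward direction, which splits into two coupled sub-problems. The first is showing that base change sends $k$-torsionfree $R$-modules to $k$-torsionfree $A$-modules; I expect this to follow from \Cref{T1} in its $n=0$ form, where the resulting identification $\Tr^G_R(M) \cong \Tr_A(M) \otimes_A \Hom_R(A,R)$ lets one convert $R$-side Ext-vanishing into $A$-side Ext-vanishing via the projectivity of $\Hom_R(A,R)$ as a left $A$-module. The second is the descent step, for which quasi-faithful flatness is presumably tailored as precisely the right condition, combining a flatness-type exactness requirement with a faithfulness-type reflection property for $k$-torsionfreeness.
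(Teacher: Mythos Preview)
Your forward direction is correct and matches the paper exactly: restrict a short exact sequence of $A$-modules to $R$, use the $n=0$ case of \Cref{C1}(1) on the outer terms, apply extension closedness over $R$, and return to $A$.

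The backward direction, however, has a genuine gap. Gorenstein projectivity of $A$ as a left $R$-module does \emph{not} give $\Tor^R_{\geq 1}(A,-)=0$; it gives $\Ext^{\geq 1}_R(A,R)=0$, which is a statement about $\Hom$, not tensor. There is no reason for $A\otimes_R-$ to preserve your short exact sequence, and indeed the paper exhibits an example (the surjection $A\llbracket x\rrbracket/(x^2)\twoheadrightarrow A$) satisfying all hypotheses of the theorem in which $A$ is not projective, hence not flat, over $R$. Your proposed justification via \Cref{T1} also goes in the wrong direction: that result compares $\Tr_A(M)$ for an $A$-module $M$ with a Gorenstein transpose over $R$, whereas here you start from an $R$-module $X$ and need information about $\Tr_A(A\otimes_R X)$.

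The fix is to use the quasi-faithfully flat hypothesis for what it actually provides: not a property of $A$ itself, but the existence of an $A$-$R$-bimodule $T$ that is finitely generated projective over $A$, faithfully flat over $R^{\mathrm{op}}$, and with $\Hom_A(T,A)$ flat over $R$. One tensors the $R$-sequence with $T$ (exactness is now free from right-flatness of $T$), shows $T\otimes_R X$ and $T\otimes_R Z$ lie in $\TF^k(A)$ via a direct transpose computation $\Tr_A(T\otimes_R X)\cong \Tr_R(X)\otimes_R\Hom_A(T,A)$ together with an adjunction-and-flatness Ext calculation, applies extension closedness over $A$, and then uses faithful flatness of $T$ to reflect $k$-torsionfreeness back to $Y$. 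Note in particular that the Gorenstein projective hypothesis on $A$ plays no role in this descent direction.
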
    
 Combining with Huang's characterization \cite{Huang1999} of the quasi $k$-Gorensteinness,  \Cref{C2} is a consequence of \Cref{T2}. An application of \Cref{C2} is an affirmative answer to a question posed by Zhao concerning quasi $k$-Gorensteinness when the Frobenius extension is between Noetherian algebras; see \Cref{affirmative}. In general, Huang \cite{Huang1999} observed that the notation of quasi $k$-Gorensteinness is not left-right symmetric. However, by applying \Cref{C2}, we could give sufficient conditions that ensure the left-right symmetry of quasi $k$-Gorensteinness; see \Cref{symmetric case}.

\begin{corollary}[See \ref{AD-quasi-Gorenstein}]\label{C2}
     Let $R$ and $A$ be two Noetherian algebras, and $\varphi\colon R\rightarrow A$ be a finite ring homomorphism such that $\varphi$ is a quasi-faithfully flat extension. Suppose that $A$, viewed as a left $R$-module, is Gorenstein projective, and $\Hom_R(A,R)$ is projective as a left $A$-module. Then for each $k > 0$,  the ring $R$ is left quasi $k$-Gorenstein if and only if so is $A$. 
\end{corollary}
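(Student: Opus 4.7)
The plan is to deduce \Cref{C2} by combining \Cref{T2} with Huang's characterization \cite{Huang1999} of quasi $k$-Gorensteinness in terms of the extension closedness of the subcategory of $k$-torsionfree modules. Since both $R$ and $A$ are assumed to be Noetherian algebras, Huang's theorem applies on both sides: for each $k>0$, the ring $R$ is left quasi $k$-Gorenstein if and only if the full subcategory of finitely generated $k$-torsionfree left $R$-modules is closed under extensions in $\mo R$, and likewise for $A$. This turns the statement into an assertion purely about extension closedness of two module subcategories.

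Next, I would invoke \Cref{T2} directly. The hypotheses of \Cref{C2}, namely that $\varphi\colon R\rightarrow A$ is a finite ring homomorphism that is quasi-faithfully flat, with $R$ Noetherian, $A$ Gorenstein projective as a left $R$-module, and $\Hom_R(A,R)$ projective as a left $A$-module, are exactly those of \Cref{T2}. Therefore, for every $k>0$, the category of $k$-torsionfree left $R$-modules is extension closed if and only if the category of $k$-torsionfree left $A$-modules is extension closed. Chaining these two equivalences yields the stated biconditional.

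The argument is essentially a one-line combination, so the only point requiring care is to confirm that Huang's characterization is being invoked in the correct form, namely that quasi $k$-Gorensteinness (for left modules over a Noetherian algebra) coincides with extension closedness of the category of $k$-torsionfree left modules, and to note that this characterization genuinely requires the Noetherian algebra hypothesis on both $R$ and $A$ which is part of our assumptions. I do not anticipate any serious obstacle; the substantive content of \Cref{C2} is absorbed entirely into \Cref{T2}, whose proof has already been carried out.
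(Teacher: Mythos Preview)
Your approach is exactly the paper's: deduce the corollary by combining \Cref{T2} with Huang's characterization (recorded in the paper as \ref{ARH}). One imprecision is worth correcting: Huang's result, as stated in \ref{ARH}, says that a Noetherian algebra $R$ is left quasi $k$-Gorenstein if and only if $\TF^i(R)$ is extension closed \emph{for every} $1\le i\le k$, not merely for the single index $i=k$ as your phrasing suggests. The repair is immediate and does not change the strategy: since \Cref{T2} applies for each positive integer, invoke it for $i=1,\dots,k$ to obtain that $\TF^i(R)$ is extension closed for all $1\le i\le k$ iff the same holds over $A$, and then apply \ref{ARH} on both sides.
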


In \Cref{Section 6}, we study the finite representation type of $k$-torsionfree modules along a ring homomorphism. Let $\varphi\colon R \rightarrow A$ be a separable split Frobenius extension. Assume that the Krull–Remak–Schmidt theorem holds for both finitely generated left $R$-modules and finitely generated left $A$-modules; this is satisfied, for instance, when $R$ and $A$ are left Artinian rings or Henselian local rings. Under these assumptions, the category of $k$-torsionfree left $R$-modules has finite representation type if and only if the same holds over $A$; see \Cref{asent and descent of finite type of TF}. A similar equivalence holds for CM finiteness; see \Cref{CM-finite result}. 

Let $\Lambda$ be a left Artinian ring, and let $G$ be a finite group acting on $\Lambda$. Suppose that $|G|$ is invertible in $\Lambda$. Then, for each $k > 0$, an application of \Cref{asent and descent of finite type of TF} shows that the category of $k$-torsionfree over the skew group ring $\Lambda G$ has finite representation type if and only if the same holds over $\Lambda$. Furthermore, the skew group ring $\Lambda G$ is CM-finite if and only if $\Lambda$ is CM-finite; see \Cref{skew-application}.

\begin{ack}
The author would like to thank Souvik Dey, Xuesong Lu, and Yaohua Zhang for their discussions related to this work. The author would like to thank the anonymous referee for pointing out the bimodule structure in \Cref{structure} and for providing valuable comments and suggestions, which have improved the paper. The author was supported by the National Natural Science Foundation of China (No. 12401046) and the Fundamental Research Funds for the Central Universities (Nos. CCNU24JC001, CCNU25JC025,  CCNU25JCPT031).
\end{ack}

\section{Notation and Terminology}

In this article, a ring is called Noetherian if it is Noetherian on both sides. Throughout this article, let $R$ be a Noetherian ring. By a module over $R$, we always mean a left $R$-module, and thus a module over $R^{\mathrm{op}}$ is identified with a right $R$-module. The category of finitely generated left $R$-modules is denoted by $\mo(R)$. Let $\proj(R)$ denote the full subcategory of $\mo(R)$ consisting of projective modules.
 For two Noetherian rings $A$ and $R$, an $A\mbox{-}R$-bimodule $M$ will mean a module over both $A$ and $R^{\rm op}$ that such that the actions are compatible: $(a\cdot m)\cdot r=a\cdot (m\cdot r)$ for all $a\in A$, $r\in R$, and $m\in M$. 

For a ring homomorphism $\varphi\colon R\rightarrow A$, the map $\varphi$ is said to be \emph{finite} provided
that $A$ is finitely generated as both a module over $R$ and as a module over $R^{\rm op}$. A ring $A$ is said to be a \emph{Noetherian algebra} if there exists a finite ring homomorphism $\varphi\colon R\rightarrow A$, where $R$ is commutative and the image of $R$ is in the center of $A$. In this case, we will say $A$ is a Noetherian algebra via $\varphi$.

\begin{chunk}\label{def-torsionfree}
  \textbf{Transpose and $k$-torsionfree modules.}     Let $M$ be a module in $\mo(R)$. For each projective resolution of $M$ over $R${\rm :}
       $$
      \epsilon\colon   P_1\xrightarrow{f}P_0\rightarrow  M\to 0,$$ 
where $P_0,P_1\in \proj(R)$. 
It induces a long exact sequence
$$
    0\rightarrow \Hom_R(M,R)\rightarrow \Hom_R(P_0,R)\xrightarrow{\Hom_R(f,R)}\Hom_R(P_1,R)\rightarrow \Tr_R^{\epsilon}(M)\rightarrow 0,
    $$
where the module $\Tr_R^\epsilon(M)$ is called a \emph{transpose} of $M$; see \cite{Auslander-Bridger}. Note that the transpose of $M$ is independent of the choice of the projective resolution of $M$ up to projective summands. If there is no confusion, we will simply write $\Tr_R^\epsilon (M)$ as $\Tr_R(M)$. For each $k>0$, the module $M$ is said to be \emph{$k$-torsionfree} if $\Ext^i_R(\Tr_R(M),R)=0$ for all $1\leq i\leq k$.  

Note that there exists an exact sequence
$$
0\rightarrow  \Ext^1_R(\Tr_R(M),R)\rightarrow M\xrightarrow{\delta_M}M^{\ast\ast}\rightarrow \Ext^2_R(\Tr_R(M),R)\rightarrow  0,
$$
where  $\delta_M(m)(f)=f(m)$ for $m\in M$ and $f\in M^\ast$. It follows that $M$ is $2$-torsionfree if and only if $\delta_M$ is an isomorphism, namely $M$ is reflexive.
\end{chunk}

\begin{chunk}
    \textbf{Syzygy modules.} 
For each $M\in\mo(R)$ and $n \geq 1$, let $\Omega^n_R(M)$ denote the $n$-th syzygy of $M$. That is, there exists an exact sequence in $\mo(R)$:
\[
0 \to \Omega^n_R(M) \to P_{n-1} \to \cdots \to P_1 \to P_0 \to M \to 0,
\]
where $P_i\in\proj(R)$ for $0 \leq i \leq n-1$. It follows from Schanuel's lemma that $\Omega^n_R(M)$ is independent of the choice of the projective resolution of $M$ up to projective summands. 

Let $\pd_R(M)$ denote the projective dimension of $M$ over $R$. Note that $\pd_R(M)\leq n$ if and only if $\Omega^n_R(M)$ is projective.
\end{chunk}

\begin{chunk}\label{gp}
\textbf{Gorenstein projective modules.} An acyclic complex of projective modules over $R$
 $$
 \mathbf{P}  = \cdots \longrightarrow P_{1}\stackrel{\partial_{1}}\longrightarrow P_0\stackrel{\partial_0}\longrightarrow P_{-1}\longrightarrow\cdots
 $$
is called \emph{totally acyclic} provided that $\Hom_R(\mathbf{P}, Q)$ is still acyclic for any projective module $Q$ over $R$. A module $M$ over $R$ is \emph{Gorenstein projective} if there exists a totally acyclic complex $\mathbf{P}$ such that $M$ is isomorphic to the image of $\partial_0$; see details in \cite{EJ}. Any projective  $R$-module is Gorenstein projective. For each totally acyclic complex $\mathbf{P}$, the image of $\partial_i$, denoted by ${\rm Im}(\partial_i)$, is Gorenstein projective for each $i\in \Z$. The full subcategory of $\mo(R)$ consisting of Gorenstein projective modules will be denoted by $\Gproj(R)$. 

For each $M\in\mo(R)$, let $\Gpd_R(M)$ denote the Gorenstein projective dimension of $M$ over $R$.  This invariant, originally introduced as the Gorenstein dimension in \cite[Section 3]{Auslander-Bridger}, measures how far $M$ is from being Gorenstein projective. If $\Gpd_R(M)<\infty$, then it follows from \cite[Remark 3.7]{Auslander-Bridger} that 
$$
\Gpd_R(M)=\sup\{n\geq 0\mid \Ext^n_R(M,R)\neq 0\};
$$
see also \cite[Corollary 2.21]{Holm}.
\end{chunk}

\begin{chunk}\label{def of I-Gorenstein}
\textbf{Iwanaga-Gorenstein rings.}
A Noetherian ring $R$ is said to be \emph{Iwanaga-Gorenstein} if $R$ has finite injective dimension over both $R$ and $R^{\rm op}$. It follows from \cite[Lemma A]{Zaks} that if $R$ is Iwanaga-Gorenstein, then
$$\id_R(R)=\id_{R^{\rm op}}(R)<\infty;$$ here, $\id$ represents the injective dimension. 
A commutative Noetherian ring $R$ is said to be \emph{Gorenstein} if $R_\p$ is Iwanaga-Gorenstein for each prime ideal $\p$ of $R$.
In the case  $\id_R(R)=\id_{R^{\rm op}}(R)=k<\infty$, $R$ is said to be \emph{$k$-Iwanaga-Gorenstein}. 

If $R$ is Iwanaga-Gorenstein, then each finitely generated module over $R$ has finite Gorenstein projective dimension.
\end{chunk}

\begin{chunk}\label{def of Frobenius}
 \textbf{Frobenius extensions.}   The notion of the Frobenius extension is a generalization of Frobenius algebra \cite{Kas}. Following \cite[Theorem 1.2]{Kadison},  a ring homomorphism $R\rightarrow A$ is called a \emph{Frobenius extension} if the following equivalent conditions hold:
\begin{enumerate}
    \item $A$ is a finitely generated projective  over $R$ and there is an isomorphism $A\cong \Hom_{R}(A,R)$ as $A\mbox{-}R$-bimodules.

    \item $A$ is a finitely generated projective over $R^{\rm op}$ and there is an isomorphism $A\cong \Hom_{R^{\rm op}}(A,R)$ as $R\mbox{-}A$-bimodules.
\end{enumerate}
Let $G$ be a finite group, the injection $\Z\rightarrow \Z G$ is a classical example of a Frobenius extension.

 Let $k$ be a field and $A$ be a $k$-algebra via a ring homomorphism $\varphi\colon k\rightarrow A$. Then $A$ is a Frobenius $k$-algebra if and only if $\varphi$ is a Frobenius extension; see \cite[Definition 4.2.5]{Weibel}.
\end{chunk}

\begin{chunk}
\textbf{Derived categories.} Let $\D(R)$ denote the derived category of complexes of modules over $R$. The category $\D(R)$ is a triangulated category with the suspension functor $[1]$; for each complex $X$, $X[1]_i\colonequals X_{i-1}$, and  $\partial_{X[1]}\colonequals -\partial_X$.  The bounded derived category $\D^f_b(R)$ is the full subcategory of $\D(R)$ consisting
of complexes with finitely generated total homology

Recall that a complex $X$ over $R$ is called \emph{homotopy projective} (resp. \emph{homotopy injective}) provided that $\Hom_R(X,-)$ (resp. $\Hom_R(-,X)$) preserves acyclic complexes. See \cite[Section 3]{Spaltenstein} for the existence of the homotopy projective resolution and the homotopy injective resolution of complexes. 

Let $\RHom_R(-,-)$ denote the right derived functor of $\Hom_R(-,-)$. For each $M, N$ in $\D(R)$, the complex $\RHom_R(M, N)$ can be represented by either $\Hom_R(P,N)$ or $\Hom_R(M,I)$, where $P\xrightarrow \simeq M$ is a homotopy projective resolution and $N\xrightarrow\simeq I$ is a homotopy injective resolution. 
Note that for modules $M,N$ over $R$, $$\Ext^i_R(M,N)=\h_{-i}\RHom_R(M,N).$$
\end{chunk}

\section{Base change of (Gorenstein) transpose}\label{Section 3}

In this section, we investigate the behavior of the (Gorenstein) transpose along a ring homomorphism. The main result is Theorem \ref{T1} from the introduction; see Theorem \ref{essential}. Throughout, $R$ will be a Noetherian ring.

\begin{chunk}\label{Gorenstein transpose}
     For each $M\in\mo(R)$ and a Gorenstein projective resolution of $M$ over $R$:
    $$
   \pi\colon  G_1\xrightarrow g G_0\rightarrow M\rightarrow 0,
    $$
   where $G_0,G_1\in\Gproj(R)$. It induces a long exact sequence over $R^{\rm op}$:
    $$
    0\rightarrow \Hom_R(M,R)\rightarrow \Hom_R(G_0,R)\xrightarrow{\Hom_R(g,R)}\Hom_R(G_1,R)\rightarrow \Tr_R^{\G,\pi}(M)\rightarrow 0,
    $$
     where the module $\Tr_R^{\G,\pi}(M)$, introduced by Huang and Huang \cite[Section 3]{Huang-Huang}, is called a \emph{Gorenstein transpose} of $M$; compare transpose in \Cref{def-torsionfree}. The definition of the Gorenstein transpose depends on the choice of the Gorenstein projective resolution of $M$. When the context is clear,  $\Tr_R^{\G,\pi}(M)$ will be simply written as $\Tr_R^{\G}(M)$.

    A transpose is a Gorenstein transpose. The converse is not true in general; see \cite{Huang-Huang}. However, for each $M\in\mo(R)$, it is proved in \cite[Theorem 3.1]{Huang-Huang} that any Gorenstein transpose of $M$ can be embedded into a transpose of $M$ with the cokernel Gorenstein projective. In particular, for each $i>0$,
    $$
    \Ext^i_{R^{\rm op}}(\Tr_R(M),R)\cong \Ext^i_{R^{\rm op}}(\Tr_R^{\G}(M),R).
    $$
\end{chunk}
\begin{chunk}\label{structure}
Let $A,B$ and $C$ be rings. For an $A\mbox{-}B$-bimodule $M$ and an $A\mbox{-}C$-bimodule $N$, $\Ext^n_A(M,N)$ is a $B\mbox{-}C$-bimodule for each $n\geq 0$. In particular, for a ring homomorphism $R\to A$, $\Ext_R^n(A,R)$ is an $A\mbox{-}R$-bimodule.

Indeed, choose an injective resolution $N\xrightarrow{\simeq} I$  of $N$ over $A$. For each $c\in C$, the right multiplication $r_c\colon N\rightarrow N; x\mapsto xc$ is $A$-linear. Lift $r_c$ to a chain map $\tilde{r_c}\colon I\rightarrow I$. The right $C$-action by $c$ on $\Ext^n_A(M,N)=\h_{-n}\Hom_A(M,I)$ is given by $\h_{-n}\Hom_A(M,\tilde{r_c})$. Similarly, there is a left $B$-action on $\Ext^n_A(M,N)$ via the projective resolution of $M$, and $\Ext^n_A(M,N)$ carries a $B\mbox{-}C$-bimodule structure.


\end{chunk}
Next, we present several lemmas used in the proof of \Cref{essential}.
\begin{lemma}\label{commutative-finite algebra}
    Let $R\rightarrow A$ be a finite ring homomorphism. Let $P_1\rightarrow P_0\rightarrow M\rightarrow 0$ be a projective resolution of $M$ in $\mo(A)$, where $P_0,P_1\in\proj(A)$. For each $n\geq 0$, there exists an exact sequence in $\mo(R^{\rm op})${\rm :}
    $$
   \Ext^n_R(P_0, R)\rightarrow \Ext^n_R(P_1,R)\rightarrow \Tr_A(M)\otimes_A \Ext^n_R(A,R)\rightarrow 0.
    $$
\end{lemma}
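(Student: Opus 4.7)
The plan is to manufacture the desired sequence by first applying $\Hom_A(-,A)$ to the projective presentation of $M$, then tensoring with $\Ext^n_R(A,R)$ over $A$ on the right, and finally identifying the first two terms $\Hom_A(P_i,A)\otimes_A \Ext^n_R(A,R)$ with $\Ext^n_R(P_i,R)$ through a standard change-of-rings argument that uses finite projectivity of $P_i$ over $A$.

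First I would apply $\Hom_A(-,A)$ to the resolution to obtain the defining right-exact sequence
$$\Hom_A(P_0,A)\xrightarrow{f^*} \Hom_A(P_1,A)\to \Tr_A(M)\to 0$$
of right $A$-modules, and then tensor on the right with $\Ext^n_R(A,R)$, viewed as a left $A$-module. Right exactness of the tensor product yields
$$\Hom_A(P_0,A)\otimes_A\Ext^n_R(A,R)\xrightarrow{f^*\otimes 1} \Hom_A(P_1,A)\otimes_A\Ext^n_R(A,R)\to \Tr_A(M)\otimes_A\Ext^n_R(A,R)\to 0.$$

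The heart of the argument is producing, for each finitely generated projective left $A$-module $P$, a natural isomorphism
$$\theta_P\colon \Hom_A(P,A)\otimes_A\Ext^n_R(A,R)\xrightarrow{\ \cong\ }\Ext^n_R(P,R).$$
I would construct $\theta_P$ as the composite of two standard isomorphisms. The first is the evaluation map $\Hom_A(P,A)\otimes_A N\to \Hom_A(P,N)$, which is an isomorphism for any left $A$-module $N$ whenever $P$ is finitely generated projective (verified on $P=A^k$ and extended by direct summands). The second is the change-of-rings isomorphism $\Hom_A(P,\Ext^n_R(A,R))\cong \Ext^n_R(P,R)$, obtained by choosing an injective resolution $R\to I^\bullet$ over $R$ and applying the adjunction $\Hom_R(P,I^j)\cong \Hom_A(P,\Hom_R(A,I^j))$ levelwise; since $P$ is $A$-projective, $\Hom_A(P,-)$ is exact and so commutes with cohomology, giving the stated identification. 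Naturality of $\theta_P$ in $P$ then identifies $f^*\otimes 1$ with $\Ext^n_R(f,R)$, yielding the required exact sequence.

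For the $R^{\rm op}$-linearity statement, the case $n=0$ is routine: $\Hom_R(A,R)$ is canonically an $A$-$R$-bimodule, and each of the above isomorphisms preserves the right $R$-action by inspection on elementary tensors. When $R$ is commutative, all modules in sight inherit a canonical $R$-action from the central map, and since both the adjunction and the evaluation isomorphism are $R$-linear by naturality, the right $R$-action transports consistently through $\theta_P$. The main obstacle I anticipate is precisely this $R^{\rm op}$-linearity for $n\geq 1$: it requires the injective resolution $I^\bullet$ to carry a compatible right $R$-action so that the change-of-rings isomorphism respects both $R$-structures simultaneously. This is not automatic in the noncommutative setting, which is exactly why the lemma restricts the $R^{\rm op}$-linearity assertion to the two listed cases; in the commutative setting every injective $R$-module is automatically an $R$-$R$-bimodule in the obvious way, which sidesteps the difficulty.
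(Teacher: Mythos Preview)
Your proposal is correct and follows essentially the same route as the paper: the paper also builds the natural isomorphism $\Hom_A(P,\Ext^n_R(A,R))\cong \Ext^n_R(P,R)$ for $P\in\proj(A)$ via an injective resolution of $R$ together with the adjunction $(\mathrm{Res},\Hom_R(A,-))$ and exactness of $\Hom_A(P,-)$, and then identifies the cokernel with $\Tr_A(M)\otimes_A\Ext^n_R(A,R)$. The only cosmetic difference is that the paper cites \cite[Proposition~4.3]{Takahashi2013} for the cokernel identification, whereas you unpack this explicitly via the evaluation isomorphism $\Hom_A(P,A)\otimes_A N\cong \Hom_A(P,N)$ applied to the tensored transpose sequence.
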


\begin{proof}
 Let $I$ be an injective resolution of $R$. Then $\Ext^n_R(A,R)=\h_{-n}\Hom_R(A,I)$. For each finitely generated projective module $P$ over $A$, since $\Hom_A(P,-)$ is exact, we get the first isomorphism below:
\begin{align*}
    \Hom_A(P,\Ext^n_R(A,R))& =\Hom_A(P,\h_{-n}\Hom_R(A,I))\\
    & \cong \h_{-n}\Hom_A(P,\Hom_R(A,I))\\
    & \cong \h_{-n}\Hom_R(P,I)\\
    & =\Ext^n_R(P,R),
\end{align*}
where the second isomorphism is due to the adjunction $({\rm Res}, \Hom_R(A,-))$.
Thus, we get the isomorphisms in the following commutative diagram in $\mo(R^{\rm op})${\rm :}
    $$
    \xymatrix{
\Hom_A(P_0,\Ext^n_R(A,R)) \ar[r]\ar[d]^-\cong &  \Hom_A(P_1,\Ext^n_R(A,R))\ar[r]\ar[d]^-\cong & C\ar[r]\ar@{-->}[d]^-\exists& 0\\
\Ext^n_R(P_0,R)\ar[r]& \Ext^n_R(P_1,R)\ar[r]&     C^\prime\ar[r]& 0.
    }
    $$
 This yields that $C\cong C^\prime$. By \cite[Proposition 4.3]{Takahashi2013}, $$\Tr_A(M)\otimes_A\Ext^n_R(A,R)\cong C.$$ Thus, we conclude that $\Tr_A(M)\otimes_A\Ext^n_R(A,R)\cong C^\prime$.


\end{proof}

\begin{lemma}\label{kernel}
    Let $R\rightarrow A$ be a finite ring homomorphism. Assume there exists $n\geq 0$ such that $\Ext^i_R(A,R)=0$ for all $i< n$. Then{\rm:}
    \begin{enumerate}
        \item For each $M\in\mo(A)$, $\Ext^i_A(M,R)=0$ for all $i< n$. 

        \item Let $P_1\rightarrow P_0\rightarrow M\rightarrow 0$ be a projective resolution of $M$ in $\mo(A)$, where $P_0,P_1\in\proj(A)$. Then it induces an exact sequence in $\mo(R^{\rm op})${\rm :}
        $$
        0\rightarrow \Ext^n_R(M,R)\rightarrow \Ext^n_R(P_0,R)\rightarrow \Ext^n_R(P_1,R)\rightarrow \Tr_A(M)\otimes_A\Ext^n_R(A,R)\rightarrow 0.
        $$
    \end{enumerate}
\end{lemma}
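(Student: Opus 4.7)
The plan is to prove (1) by induction on $i$, and then deduce (2) by splicing the vanishing information from (1) with the cokernel description already supplied by \Cref{commutative-finite algebra}. Throughout, every $M\in\mo(A)$ is viewed as an $R$-module via $\varphi$; the subscript ``$A$'' in the statement of (1) reads most naturally as a typographical slip for ``$R$'', since $R$ carries no canonical $A$-module structure.

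For (1), I induct on $i<n$. The base case $i=0$ requires $n\geq 1$: choose any surjection $A^{\oplus k}\twoheadrightarrow M$ in $\mo(A)$ and apply $\Hom_R(-,R)$ to obtain the injection $\Hom_R(M,R)\hookrightarrow \Hom_R(A,R)^{\oplus k}=0$. For the inductive step, pick a short exact sequence $0\to\Omega\to P_0\to M\to 0$ in $\mo(A)$ with $P_0\in\proj(A)$; since $A$ is Noetherian, $\Omega\in\mo(A)$ as well. The long exact sequence of $\Ext^{\ast}_R(-,R)$ produces
\[
\Ext^{i-1}_R(\Omega,R)\longrightarrow \Ext^i_R(M,R)\longrightarrow \Ext^i_R(P_0,R),
\]
whose outer terms vanish: the left by the inductive hypothesis applied to $\Omega$ (using $i-1<n$), and the right because $P_0$ is a direct summand of a free $A$-module and $\Ext^i_R(A,R)=0$ for $i<n$.

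For (2), extend to $P_2\to P_1\to P_0\to M\to 0$ with $P_2\in\proj(A)$, and set $\Omega_1=\ker(P_0\to M)\in\mo(A)$ and $\Omega_2=\ker(P_1\to \Omega_1)\in\mo(A)$. Applying (1) to $\Omega_1$ yields $\Ext^{n-1}_R(\Omega_1,R)=0$, so the long exact sequence coming from $0\to\Omega_1\to P_0\to M\to 0$ gives
\[
0\longrightarrow \Ext^n_R(M,R)\longrightarrow \Ext^n_R(P_0,R)\longrightarrow \Ext^n_R(\Omega_1,R).
\]
Applying (1) to $\Omega_2$ yields $\Ext^{n-1}_R(\Omega_2,R)=0$, so $0\to\Omega_2\to P_1\to \Omega_1\to 0$ produces an injection $\Ext^n_R(\Omega_1,R)\hookrightarrow \Ext^n_R(P_1,R)$. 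Splicing identifies $\Ext^n_R(M,R)$ with the kernel of $\Ext^n_R(P_0,R)\to \Ext^n_R(P_1,R)$; combining this with the right-exact three-term sequence of \Cref{commutative-finite algebra} furnishes the asserted four-term exact sequence. The $R^{\rm op}$-linearity in the cases $n=0$ or $R$ commutative is inherited from \Cref{commutative-finite algebra}, since every morphism in the splice is a functorially induced one that is already $R^{\rm op}$-linear under those hypotheses.

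The only delicate point is the insistence on taking syzygies within $\mo(A)$ rather than $\mo(R)$, so that (1) applies to the resulting kernels at each step; this is legitimate because $A$ is Noetherian on both sides (a consequence of $\varphi$ being finite together with the Noetherianness of $R$). Everything else is straightforward bookkeeping with long exact sequences of Ext.
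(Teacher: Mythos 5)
Your proposal is correct and follows essentially the same route as the paper: part (1) by dimension shifting over syzygies taken in $\mo(A)$ (the paper phrases the same induction as an iterated long-exact-sequence argument applied to an arbitrary $M$ and its syzygy), and part (2) by splicing the two resulting injections on $\Ext^n_R(-,R)$ with the cokernel identification from \Cref{commutative-finite algebra}, noting that the composite map is the one induced by $P_1\to P_0$. Your reading of the subscript in (1) as $\Ext^i_R(M,R)$ matches what the paper's own proof establishes, so there is nothing further to add.
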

\begin{proof}

 (1)  Set $(-)^\ast=\Hom_R(-,R)$. Choose a short exact sequence
    $$
    0\rightarrow N\rightarrow P\rightarrow M\rightarrow 0
    $$
    in $\mo(A)$, where $P\in\proj(A)$.  Since $\Ext^i_R(A,R)=0$ for all $i<n$, we get that $\Ext^i_R(P,R)=0$ for all $i<n$. Combining with this, by applying $\Hom_R(-,R)$ to the above short exact sequence, we get a long exact sequence
    \begin{center}
     $
    0\rightarrow M^\ast\rightarrow 0\rightarrow N^\ast\rightarrow \Ext^1_R(M,R)\rightarrow 0\rightarrow \Ext^1_R(N,R)\rightarrow \Ext^2_R(M,R)\rightarrow 0\rightarrow \cdots\rightarrow \Ext^{n-2}_R(M,R)\rightarrow 0\rightarrow \Ext^{n-2}_R(N,R)\rightarrow \Ext^{n-1}_R(M,R)\rightarrow 0.
    $
    \end{center}
It follows that $M^\ast=0$, and hence $N^\ast=0$ as $M\in \mo(A)$ is arbitrary. Combining this with the long exact sequence, we have $\Ext^1_R(M,R)=0$, and hence $\Ext^1_R(N,R)=0$ as well. By the same argument, we deduce that $\Ext^i_R(M,R)=0$ for $i<n$. 

(2) Consider the short exact sequence in $\mo(A)$:
$$
0\rightarrow \Omega_A^1(M)\rightarrow P_0\rightarrow M\rightarrow 0.
$$
Combining this with (1), by applying $\Hom_R(-,R)$ to the above short exact sequence, there exists an exact sequence in $\mo(R^{\rm op})$:
\begin{equation}\label{an exact sequence}\tag{$\dagger$}
    0\rightarrow \Ext^n_R(M,R)\rightarrow \Ext^n_R(P_0,R)\xrightarrow\alpha \Ext^n_R(\Omega^1_A(M),R).
\end{equation}
Note that there is also a short exact sequence in $\mo(A)$:
$$
0\rightarrow \Omega_A^2(M)\rightarrow P_1\rightarrow \Omega^1_A(M)\rightarrow 0.
$$
The same reason as (\ref{an exact sequence}), there is an injection $\Ext^n_R(\Omega^1_A(M),R)\xrightarrow {\iota}\Ext^n_R(P_1,R)$. Since $\iota$ is injective, combining with (\ref{an exact sequence}), we get an exact sequence
$$
0\rightarrow \Ext^n_R(M,R)\rightarrow \Ext^n_R(P_0,R)\xrightarrow{\iota\circ \alpha} \Ext^n_R(P_1,R).
$$
Note that the map $\iota\circ \alpha$ is induced by the map $P_1\rightarrow P_0$. 
By \Cref{commutative-finite algebra}, the cokernel of $\iota\circ \alpha$ is isomorphic to $\Tr_A(M)\otimes_A\Ext^n_R(A,R)$. 
\end{proof}

\begin{lemma}\label{long exact sequence}
   For each $M\in\mo(R)$, choose an exact sequence
    $$
 0\rightarrow \Omega^n_R(M)\rightarrow P_{n-1}\rightarrow P_{n-2}\rightarrow \cdots \rightarrow P_1\rightarrow P_0\rightarrow M\rightarrow 0,
    $$
    where $P_i\in \proj(R)$ for $0\leq i\leq n-1$. 
    Assume there exists $n> 0$ such that $\Ext^i_R(M,R)=0$ for all $i<n$. Then the above exact sequence induces a long exact sequence in $\mo(R^{\rm op})${\rm :}
        $$
     0\rightarrow (P_0)^\ast\rightarrow (P_1)^\ast\rightarrow \cdots  \rightarrow (P_{n-2})^\ast\rightarrow (P_{n-1})^\ast\rightarrow  (\Omega^n_R(M))^\ast\rightarrow \Ext^n_R(M,R)\rightarrow 0,
        $$
where $(-)^\ast\colonequals\Hom_R(-,R)$.
 In particular, if $n\geq 2$, then there exists a short exact sequence 
    $$
   0\rightarrow \Tr_R(\Omega^{n-2}_R(M))\rightarrow (\Omega_R^n(M))^\ast\rightarrow \Ext^n_R(M,R)\rightarrow 0. 
    $$
\end{lemma}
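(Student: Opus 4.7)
The plan is to break the given long exact sequence into short exact sequences via the syzygies $\Omega^i_R(M)$, apply the functor $(-)^\ast=\Hom_R(-,R)$ to each, and splice the resulting pieces. Concretely, for $0\le i\le n-1$, the given resolution decomposes into short exact sequences
\[
0\longrightarrow \Omega^{i+1}_R(M)\longrightarrow P_i\longrightarrow \Omega^i_R(M)\longrightarrow 0,
\]
with the convention $\Omega^0_R(M)=M$. Applying $(-)^\ast$ and using that $P_i$ is projective, dimension shifting gives $\Ext^j_R(\Omega^i_R(M),R)\cong \Ext^{i+j}_R(M,R)$ for $j\ge 1$.

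Under the hypothesis $\Ext^{i+1}_R(M,R)=0$ for $0\le i\le n-2$, each of these short exact sequences yields a short exact sequence
\[
0\longrightarrow (\Omega^i_R(M))^\ast\longrightarrow (P_i)^\ast\longrightarrow (\Omega^{i+1}_R(M))^\ast\longrightarrow 0,
\]
while for $i=n-1$ the connecting homomorphism is $\Ext^1_R(\Omega^{n-1}_R(M),R)\cong \Ext^n_R(M,R)$, producing
\[
0\longrightarrow (\Omega^{n-1}_R(M))^\ast\longrightarrow (P_{n-1})^\ast\longrightarrow (\Omega^n_R(M))^\ast\longrightarrow \Ext^n_R(M,R)\longrightarrow 0.
\]
Since $n>0$, the case $i=0$ of the hypothesis gives $M^\ast=\Ext^0_R(M,R)=0$, so the sequence at $i=0$ specializes to an isomorphism $(P_0)^\ast\xrightarrow{\cong}(\Omega^1_R(M))^\ast$. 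Splicing these sequences along the common terms $(\Omega^i_R(M))^\ast$ yields the asserted long exact sequence. I would also note that each splicing map agrees with the one induced from the original resolution, which is automatic because the connecting maps at each step are built from the same $(P_i)^\ast \to (P_{i+1})^\ast$.

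For the ``in particular'' part (assuming $n\ge 2$), the key observation is that
\[
P_{n-1}\longrightarrow P_{n-2}\longrightarrow \Omega^{n-2}_R(M)\longrightarrow 0
\]
is the beginning of a projective resolution of $\Omega^{n-2}_R(M)$, so by definition $\Tr_R(\Omega^{n-2}_R(M))$ is the cokernel of $(P_{n-2})^\ast\to (P_{n-1})^\ast$. Exactness of the long sequence at $(P_{n-1})^\ast$ says that the image of $(P_{n-2})^\ast\to (P_{n-1})^\ast$ equals the kernel of $(P_{n-1})^\ast\to (\Omega^n_R(M))^\ast$; therefore the latter map factors through an injection $\Tr_R(\Omega^{n-2}_R(M))\hookrightarrow (\Omega^n_R(M))^\ast$ whose cokernel is, by the final segment of the long exact sequence, exactly $\Ext^n_R(M,R)$.

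There is no genuine technical obstacle; the work is essentially a controlled diagram chase. The only thing to watch is the bookkeeping at the endpoints: ensuring $M^\ast=0$ (which is why $n>0$ is needed for the leading $0\to (P_0)^\ast$ to appear), and verifying that the spliced maps literally coincide with the maps $(P_i)^\ast\to (P_{i+1})^\ast$ induced from the differentials of the resolution, so that the cokernel description of $\Tr_R(\Omega^{n-2}_R(M))$ really fits inside the long exact sequence.
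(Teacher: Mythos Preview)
Your proposal is correct and follows essentially the same approach as the paper: break the resolution into short exact sequences at the syzygies, apply $(-)^\ast$, use $\Ext^1_R(\Omega^i_R(M),R)\cong\Ext^{i+1}_R(M,R)$ together with the vanishing hypothesis to obtain short (resp.\ four-term) exact sequences for $i<n-1$ (resp.\ $i=n-1$), and splice. Your explicit remarks that $M^\ast=0$ forces the leading $0\to (P_0)^\ast$, and that the cokernel of $(P_{n-2})^\ast\to(P_{n-1})^\ast$ is by definition $\Tr_R(\Omega^{n-2}_R(M))$, make the argument slightly more self-contained than the paper's version.
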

\begin{proof}
  
    For each $i<n$, consider the short sequence in $\mo(R)$:
    $$
 0\rightarrow    \Omega^{i+1}_R(M)\rightarrow P_i\rightarrow \Omega^i_R(M)\rightarrow 0.
    $$
    This induces an exact sequence in $\mo(R^{\rm op})$:
    $$
    0\rightarrow (\Omega_R^i(M))^\ast\rightarrow (P_i)^\ast\rightarrow (\Omega^{i+1}_R(M))^\ast\rightarrow \Ext^1_R(\Omega^i_R(M),R)\rightarrow 0
    $$
    Note that for each $i\geq 0$, there is an isomorphism
    $$\Ext^1_R(\Omega^i_R(M),R)\cong \Ext^{i+1}_R(M,R).$$ This is equal to $0$ if $i<n-1$. Thus, for each $i<n-1$, we get a short exact exact sequence in $\mo(R^{\rm op})$:
    $$
    0\rightarrow (\Omega_R^i(M))^\ast\rightarrow (P_i)^\ast\rightarrow (\Omega^{i+1}_R(M))^\ast\rightarrow 0,
    $$
    and for $i=n-1$, we get an exact sequence in $\mo(R^{\rm op})$:
    $$
    0\rightarrow (\Omega_R^{n-1}(M))^\ast\rightarrow (P_{n-1})^\ast\rightarrow (\Omega^{n}_R(M))^\ast\rightarrow \Ext^n_R(M,R)\rightarrow 0.
    $$
  The desired long exact sequence in the lemma can be obtained by combining these exact sequences. This completes the proof.
\end{proof}

\begin{chunk}\label{snake}
Assume there is a commutative diagram  in $\mo(R)$ with exact rows and exact columns:
$$
\xymatrix{
& 0 \ar[d]& 0\ar[d]& 0\ar[d] &0\ar[d]\\
0\ar[r]& M_1\ar[r]\ar[d]^-{f_1}& M_2\ar[r]\ar[d]^-{f_2}& M_3\ar[d]^-{f_3}\ar[r]& M_4\ar[r]\ar[d]^-{f_4}& 0\\
0\ar[r]& N_1\ar[r]& N_2\ar[r]& N_3\ar[r]& N_4\ar[r]& 0.
}
$$
By using the Snake lemma, the above diagram induces a long exact sequence
$$
0\rightarrow \Coker(f_1)\rightarrow \Coker(f_2)\rightarrow \Coker(f_3)\rightarrow \Coker(f_4)\rightarrow 0.
$$
\end{chunk}

\begin{theorem}\label{essential}
       Let $\varphi\colon R\rightarrow A$ be a finite ring homomorphism, where $R$ is a Noetherian ring. Assume $\Gpd_R(A)<\infty$ and there exists $n\geq 0$ such that $\Ext^i_R(A,R)=0$ for all $i\neq n$. For each $M\in\mo(A)$ and the transpose $\Tr_A(M)$, we have{\rm:}
    \begin{enumerate}
        \item  If $n=0$, then there exists an isomorphism  in $\mo(R^{\rm op})${\rm :}
        $$
 \Tr^G_R(M)\cong \Tr_A(M)\otimes_A \Hom_R(A,R). 
 $$
 
\item  If $n=1$,  then there exists a short exact sequence in $\mo(R^{\rm op})${\rm :}
$$
0\rightarrow Q\rightarrow \Tr_R^G(\Omega^1_R(M))\rightarrow \Tr_A(M)\otimes_A \Ext^1_R(A,R)\rightarrow 0,
$$
 where $Q$ is a finitely generated projective over $R$. 
        \item If $n\geq 2$, then there exists a short exact sequence in $\mo(R^{\rm op})${\rm :}
       $$
       0\rightarrow \Tr_R(\Omega^{n-2}_R(\Omega^2_A(M)))\rightarrow \Tr^G_R(\Omega^{n}_R(M))\rightarrow \Tr_A(M)\otimes_A \Ext^n_R(A,R)\rightarrow 0
       $$
       with $\pd_R(\Tr_R(\Omega^{n-2}_R(\Omega^2_A(M))))
       \leq n-1$.
    \end{enumerate}
    
\end{theorem}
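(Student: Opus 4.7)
My plan is to split the proof along the cases $n=0$ and $n\geq 1$, treating the latter uniformly via a Snake lemma argument.

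For $n=0$: the formula recalled in \ref{gp} combined with the hypotheses forces $\Gpd_R(A) = 0$, so $A$, and therefore every finitely generated projective $A$-module (as a summand of some $A^{\oplus k}$), is Gorenstein projective over $R$. The $A$-projective presentation $P_1 \to P_0 \to M \to 0$ is then itself a Gorenstein projective presentation over $R$; dualizing and applying \Cref{commutative-finite algebra} at $n=0$ identifies the cokernel $\Tr^G_R(M)$ with $\Tr_A(M) \otimes_A \Hom_R(A,R)$ as $R^{\mathrm{op}}$-modules, which is part~(1).

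For $n\geq 1$ (with $R$ commutative): start from a $2$-term projective presentation $P_1^A \xrightarrow{f} P_0^A \to M \to 0$ over $A$ and set $\Omega^2_A(M) = \ker f$; via \ref{gp} we get $\Gpd_R(P_j^A)\leq n$, and hence $\Omega^n_R(P_j^A)\in\Gproj(R)$. The Horseshoe lemma applied to the two short exact sequences $0 \to \Omega^1_A(M) \to P_0^A \to M \to 0$ and $0 \to \Omega^2_A(M) \to P_1^A \to \Omega^1_A(M) \to 0$ (equivalently, an iterated mapping cone built from coherently chosen $R$-projective resolutions of $P_0^A$, $P_1^A$, and $\Omega^2_A(M)$) produces a Gorenstein projective presentation $G_1 \to G_0 \to \Omega^n_R(M) \to 0$ whose terms are of the form $G_j = \Omega^n_R(P_j^A)\oplus \tilde Q_j$ for suitable projective summands $\tilde Q_j$ coming from those resolutions. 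Applying $\Hom_R(-,R)$ and combining with \Cref{long exact sequence} for $P_0^A$ and $P_1^A$ (whose hypothesis is supplied by \Cref{kernel}(1)), I would assemble a commutative diagram with two exact rows $0 \to K_j \to G_j^* \to \Ext^n_R(P_j^A,R) \to 0$ (for $j=0,1$) and vertical maps $\alpha\colon K_0\to K_1$, $\beta\colon G_0^* \to G_1^*$, $\gamma\colon \Ext^n_R(P_0^A,R)\to \Ext^n_R(P_1^A,R)$ induced by $f$; here $\Coker(\beta) = \Tr^G_R(\Omega^n_R(M))$ and, by \Cref{commutative-finite algebra}, $\Coker(\gamma) = \Tr_A(M)\otimes_A \Ext^n_R(A,R)$.

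The Snake lemma finishes the argument: \Cref{long exact sequence} applied to $M$ shows that $\Ker(\beta) = (\Omega^n_R(M))^*$ surjects onto $\Ker(\gamma) = \Ext^n_R(M,R)$, so the connecting homomorphism vanishes and \ref{snake} delivers the short exact sequence
$$0 \to \Coker(\alpha) \to \Tr^G_R(\Omega^n_R(M)) \to \Tr_A(M) \otimes_A \Ext^n_R(A,R) \to 0.$$
A second application of \Cref{long exact sequence}, now to $\Omega^2_A(M)$ (again via \Cref{kernel}(1)), together with the mapping-cone bookkeeping of the summands $\tilde Q_j$, identifies $\Coker(\alpha) \cong \Tr_R(\Omega^{n-2}_R(\Omega^2_A(M)))$ for $n\geq 2$, while for $n=1$ the projective summands in $K_0,K_1$ absorb the obstruction and leave $\Coker(\alpha)$ projective over $R$; the bound $\pd_R\leq n-1$ is then automatic, since a transpose always has projective dimension at most $1$. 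The most delicate step will be choosing the $R$-resolutions of $P_0^A$, $P_1^A$, and $\Omega^2_A(M)$ coherently so that the summands $\tilde Q_j$ appear in matching positions in $K_0$ and $K_1$ and so that the identification of $\Coker(\alpha)$ drops out of the second application of \Cref{long exact sequence}; commutativity of $R$ in (2) and (3) is used only to ensure the $R^{\mathrm{op}}$-linearity supplied by \Cref{commutative-finite algebra} when $n\geq 1$.
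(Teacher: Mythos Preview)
Your overall strategy---Horseshoe lemma to produce a Gorenstein projective presentation of $\Omega^n_R(M)$, then dualize and apply the Snake lemma with \Cref{commutative-finite algebra}, \Cref{kernel}, and \Cref{long exact sequence} supplying the rows---is exactly the paper's approach. The paper organizes the bookkeeping you flag as ``delicate'' by building a single four-column Horseshoe diagram on $0\to\Omega^2_A(M)\to P^1\to P^0\to M\to 0$ and dualizing the whole thing, so that the identification of $\Coker(\alpha)$ and the compatibility of the projective summands $\tilde Q_j$ fall out of the exactness of the rows rather than having to be matched by hand. Your compressed two-column version is equivalent once that bookkeeping is done.

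There is one genuine slip: the sentence ``the bound $\pd_R\leq n-1$ is then automatic, since a transpose always has projective dimension at most~$1$'' is false. A transpose $\Tr_R(X)$ comes with a two-term presentation $P_0^*\to P_1^*\to \Tr_R(X)\to 0$, but the kernel of $P_0^*\to P_1^*$ is $X^*$, which is typically neither zero nor projective, so $\pd_R(\Tr_R(X))$ is unconstrained in general. The correct argument is already in your hands: \Cref{kernel}(1) gives $\Ext^i_R(\Omega^2_A(M),R)=0$ for $i<n$, so the long exact sequence of \Cref{long exact sequence} applied to $\Omega^2_A(M)$ yields an exact complex
\[
0\to (P^2_0)^*\to (P^2_1)^*\to\cdots\to (P^2_{n-1})^*\to \Tr_R(\Omega^{n-2}_R(\Omega^2_A(M)))\to 0
\]
of length $n$, which is precisely the bound $\pd_R\leq n-1$. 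Replace your justification with this and the argument is complete.
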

\begin{proof}
By \Cref{gp}, we have $\Gpd_R(A)=n$. The statement of (1) now follows from \Cref{kernel}(2). The proof of the statement (2) is similar to that of (3) and easier. Therefore, we will prove statement (3) as an example.

(3)    For the case of $n\geq 2$. Choose an exact sequence in $\mo(A)$:
    $$
  0\rightarrow \Omega^2_A(M)\rightarrow P^1\rightarrow P^0\rightarrow M\rightarrow 0,
    $$
where each $P^0,P^1\in\proj(A)$. 

Consider the above exact sequence as an exact sequence over $R$. By the Horseshoe lemma, we get the following commutative diagram in $\mo(R)$ with exact rows and exact columns${\rm:}$
    $$
    \xymatrix{
       & 0\ar[d]& 0\ar[d]& 0\ar[d]& 0\ar[d]&\\
0\ar[r]& \Omega^n_R(\Omega^2_A(M))\ar[r]\ar[d]& \Omega^n_R(P^1)\ar[r]\ar[d]& \Omega^n_R(P^0)\ar[r]\ar[d]& \Omega^n_R(M)\ar[r]\ar[d]& 0\\
    0\ar[r]& P^2_{n-1}\ar[r] \ar[d]& P^1_{n-1}\ar[r]\ar[d]& P^0_{n-1}\ar[r]\ar[d]& P^M_{n-1}\ar[r]\ar[d]& 0\\
      0\ar[r]& P^2_{n-2}\ar[r] \ar[d]& P^1_{n-2}\ar[r]\ar[d]& P^0_{n-2}\ar[r]\ar[d]& P^M_{n-2}\ar[r]\ar[d]& 0\\
    & \vdots\ar[d]& \vdots\ar[d]& \vdots\ar[d]& \vdots\ar[d]& \\
    0\ar[r]& P^2_1\ar[r] \ar[d]& P^1_1\ar[r]\ar[d]& P^0_1\ar[r]\ar[d]& P^M_1\ar[r]\ar[d]& 0\\
0\ar[r]& P^2_0\ar[r] \ar[d]& P^1_0\ar[r]\ar[d]& P^0_0\ar[r]\ar[d]& P^M_0\ar[r]\ar[d]& 0\\
    0\ar[r]& \Omega^2_A(M)\ar[r]\ar[d]& P^1\ar[r]\ar[d]& P^0\ar[r] \ar[d]& M\ar[r]\ar[d]& 0\\
    & 0& 0& 0& 0&
},
    $$
    where  $P^i_j$ and  $P^M_j$ are finitely generated projective over $R$ for $0\leq i\leq 2, 0\leq j\leq n-1$. Combining this with \Cref{kernel} and \Cref{long exact sequence}, by applying the functor $(-)^\ast\colonequals\Hom_R(-,R)$ to the above commutative diagram, we can obtain the following commutative diagram in $\mo(R^{\rm op})$ with exact rows and exact columns${\rm:}$
    $$
    \xymatrix{
       & 0\ar[d]& 0\ar[d]& 0\ar[d]& 0\ar[d]&\\
     0\ar[r]& (P^M_{0})^\ast\ar[r] \ar[d]& (P^0_{0})^\ast\ar[r]\ar[d]& (P^1_{0})^\ast\ar[r]\ar[d]& (P^2_{0})^\ast\ar[r]\ar[d]& 0\\
       0\ar[r]& (P^M_{1})^\ast\ar[r] \ar[d]& (P^0_{1})^\ast\ar[r]\ar[d]& (P^1_{1})^\ast\ar[r]\ar[d]& (P^2_{1})^\ast\ar[r]& 0\\
    & \vdots\ar[d]& \vdots\ar[d]& \vdots\ar[d]& \vdots\ar[d]& \\
       0\ar[r]& (P^M_{n-2})^\ast\ar[r] \ar[d]^-{\alpha_1}& (P^0_{n-2})^\ast\ar[r]\ar[d]^-{\alpha_2}& (P^1_{n-2})^\ast\ar[r]\ar[d]^-{\alpha_3}& (P^2_{n-2})^\ast\ar[r]\ar[d]^-{\alpha_4}& 0\\
    0\ar[r]& (P^M_{n-1})^\ast\ar[r] \ar[d]& (P^0_{n-1})^\ast\ar[r]^-f\ar[d]& (P^1_{n-1})^\ast\ar[r]\ar[d]& (P^2_{n-1})^\ast\ar[r]& 0\\
0\ar[r]& (\Omega^n_R(M))^\ast\ar[r] \ar[d]& (\Omega^n_R(P^0))^\ast\ar[r]^-g\ar[d]& (\Omega^n_R(P^1))^\ast\ar[d]& & \\
    0\ar[r]& \Ext^n_R(M,R)\ar[r]\ar[d]& \Ext^n_R(P^0,R)\ar[r]^-h\ar[d]& \Ext^n_R(P^1,R) \ar[d]& &\\
    & 0& 0& 0& &
    }.
    $$
By repeatedly applying \Cref{snake} to the diagram above, we obtain an induced exact sequence involving the cokernels $\Coker(\alpha_i)$. Namely, there is an exact sequence
$$
0\rightarrow \Tr_R(\Omega_R^{n-2}(M))\rightarrow \Tr_R(\Omega_R^{n-2}(P^0))\xrightarrow {\overline{f}}\Tr_R(\Omega_R^{n-2}(P^1))\rightarrow \Tr_R(\Omega_R^{n-2}(\Omega^2_A(M)))\rightarrow 0.
$$
Note that the projective dimension of each $\Coker(\alpha_i)$ is at most $n-1$. In particular, $\pd_R (\Tr_R(\Omega_R^{n-2}(\Omega^2_A(M))))\leq n-1$.

The two middle exact columns in the diagram above induce a commutative diagram in $\mo(R^{\rm op})${\rm :}
$$
\xymatrix{
0\ar[r] & \Tr_R(\Omega^{n-2}_R(P^0))\ar[r]\ar[d]^-{\overline{f}} & (\Omega^n_R(P^0))^\ast\ar[r]\ar[d]^-g& \Ext^n_R(P^0,R)\ar[r]\ar[d]^-h& 0\\
0\ar[r] & \Tr_R(\Omega^{n-2}_R(P^1))\ar[r]& (\Omega^n_R(P^1))^\ast\ar[r]& \Ext^n_R(P^1,R)\ar[r]& 0\\
}.
$$
By the above two commutative diagrams, the induced map from the kernel of $g$ to the kernel of $h$ is surjective. Combining this with the Snake lemma, we obtain a short exact sequence
$$
0\rightarrow \Coker({\overline{f}})\rightarrow \Coker(g)\rightarrow \Coker(h)\rightarrow 0.
$$
As the above long exact sequence shows, $\Coker({\overline{f}}) \cong \Tr_R(\Omega^{n-2}_R(\Omega_A^2(M)))$. It follows from \Cref{commutative-finite algebra} that $\Coker(h)\cong \Tr_A(M)\otimes_A\Ext^n_R(A,R)$ in $\mo(R^{\rm op})$. As mentioned at the beginning, $\Gpd_R(A) = n$. It follows that $\Omega^n_R(P^0)$ and $\Omega^n_R(P^1)$ are Gorenstein projective over $R$, and hence $\Coker(g)$ is isomorphic to a Gorenstein transpose $\Tr^G_R(\Omega^n_R(M))$. Thus, we obtain the desired short exact sequence.
\end{proof}
\begin{remark}\label{compare}
    (1) Let $\Tr_R^G(M)$ be a Gorenstein transpose of a module $M\in\mo(R)$. As mentioned in \ref{Gorenstein transpose}, Huang and Huang \cite[Theorem 3.1]{Huang-Huang} observed that there exists a short exact sequence
     $$
    0\rightarrow \Tr_R^G(M)\rightarrow\Tr_R(M) \rightarrow H\rightarrow 0
    $$
with $H$ Gorenstein projective. Motivated by this, Zhao and Sun \cite[Theorem 2.3]{Zhao-Sun} proved that  there exists a short exact sequence
    $
    0\rightarrow H^\prime\rightarrow \Tr_R^G(M)\rightarrow \Tr_R(M)\rightarrow 0
    $
    with $H^\prime$ Gorenstein projective.

    (2)  Our result is quite different from the above results; note that Theorem \ref{essential} is trivial if $R=A$.  
   Let $\varphi\colon R\rightarrow A$ be a ring homomorphism, where $R$ is a  Noetherian ring. Assume $\Gpd_R(A)<\infty $,  and there exists $n\geq 0$ such that $\Ext^i_R(A,R)=0$ for $i\neq n$ and $\Ext^n_R(A,R)\cong A$ in $\mo(A)$. Then $\Tr_A(M)\otimes_A\Ext^n_R(A,R)\cong \Tr_A(M)$. Hence, for each transpose $\Tr_A(M)$ of $M\in\mo(A)$, Theorem \ref{essential} yields:
    \begin{itemize}
        \item Assume $n=0$. $\Tr_A(M)$ is isomorphic to some Gorenstein transpose $\Tr_R^G(M)$.

        \item Assume $n\geq 1$, there exists a Gorenstein transpose $\Tr_R^G(\Omega^n_R(M))$ and a short exact sequence
        $
        0\rightarrow K\rightarrow \Tr_R^G(\Omega^n_R(M))\rightarrow \Tr_A(M)\rightarrow 0
        $
        with $\pd_R(K)\leq n-1$.
    \end{itemize}
    \end{remark}
\begin{chunk}\label{def-of-ad-gdim}
Similar to the notion of Gorenstein projective dimension for a module, there is a corresponding concept of Gorenstein dimension for complexes in the bounded derived category; see \cite[Definition 2.3.2]{Christensen-book}. Let $\varphi\colon R \rightarrow A$ be a finite ring homomorphism. Following \cite[Definition 3.1]{Liu-Ren}, the map $\varphi$ is said to have the \emph{ascent and descent of finite Gorenstein dimension} if the following two conditions hold:
\begin{enumerate}
    \item A complex in $\D^f_b(A)$ has finite Gorenstein dimension over $A$ if and only if it has finite Gorenstein dimension over $R$;
    \item A complex in $\D^f_b(A^{\mathrm{op}})$ has finite Gorenstein dimension over $A^{\mathrm{op}}$ if and only if it has finite Gorenstein dimension over $R^{\mathrm{op}}$.
\end{enumerate}
This holds, for example, when $\varphi$ is a finite local homomorphism between commutative Gorenstein local rings. For other classes of ring homomorphisms that satisfy the above property, see \cite{Liu-Ren}.

If $R$ is commutative and $A$ is an $R$-algebra via $\varphi$, then an equivalent characterization of the above property is given in \cite[Theorem 3.12]{Liu-Ren}. Moreover, the notion of \emph{ascent and descent of the Gorenstein projective property} is also investigated in \cite{Liu-Ren}.
\end{chunk}

    \begin{chunk}
    Let $\varphi\colon R\rightarrow A$ be a finite ring homomorphism.  In the subsequent sections, we will consider the following condition:
    $$
    \RHom_R(A,R)\cong P[-n] \text{ in }\D(A) \text{ for some } P\in\proj(A) \text{ and }n\geq 0.
    $$
    This condition is equivalent to that there exists $n\geq 0$ such that $\Ext^i_R(A,R)=0$ for $i\neq n$ and $\Ext^n_R(A,R)$ is finitely generated projective over $A$. Note that this condition is stronger than the assumption of \Cref{essential}.
\end{chunk}

\begin{example}\label{AD-Gorenstein dimension}
   (1)
   Let $\varphi\colon R\rightarrow A$ be a Frobenius extension; see \ref{def of Frobenius}.
   Then $A$ is projective over $R$ on both sides and $\Hom_R(A,R)\cong A$ as $A\mbox{-}R$-bimodules.

  (2)   Let \(\varphi\colon R \to A\) be a finite ring homomorphism between commutative Noetherian rings. Assume that \(A\) is local and that \(\varphi\) has the ascent and descent of finite Gorenstein dimension property (\Cref{def-of-ad-gdim}). Then  $\Gpd_R(A)=n$ and
$
\RHom_R(A, R) \cong A[-n] \quad \text{in } \D(A) \text{ for some } n \geq 0.
$

Indeed, since $\varphi$ has ascent and descent of finite Gorenstein dimension property, it follows from \cite[Theorem 3.11]{Liu-Ren} that $\Gpd_R(A)<\infty$ and $\RHom_R(A,R)$ is perfect over $A$.  Then, by \cite[Theorem 6.1 and Proposition 8.3]{Christensen2001}, one has an isomorphism $\RHom_R(A,R)\cong A[-n]$ in $\D(A)$ for some $n\geq 0$. The equality $\Gpd_R(A)=n$ follows from \Cref{gp}.
\end{example}

\begin{corollary}
    Let $\varphi\colon R\rightarrow A$ be a finite ring homomorphism, where $R$ is a Noetherian ring. Assume $\Gpd_R(A)<\infty $ and $\RHom_R(A,R)\cong A[-n]$ in $\D(A)$ for some $n\geq 0$. For each $M\in\mo(A)$, there exists a commutative diagram in $\mo(R^{\rm op})$ with exact rows and exact columns
$$
\xymatrix{
& & 0\ar[d] & 0\ar[d]& &\\
0\ar[r]& K\ar[r]\ar@{=}[d] &\Tr^G_R(\Omega^n_R(M))\ar[r]\ar[d]& \Tr_A(M)\ar[r]\ar[d]& 0\\
0\ar[r]& K\ar[r] &\Tr_R(\Omega^n_R(M))\ar[r]\ar[d] & W\ar[r]\ar[d]& 0\\
& & C \ar@{=}[r]\ar[d]& C\ar[d]& \\
& & 0 & 0& &
},
$$
where $C$ is Gorenstein projective over $R$ and $\pd_R(K)\leq n-1$. In particular, there exists a short exact sequence in $\mo(R^{\rm op})${\rm :}
$$
0\rightarrow \Tr^G_R(\Omega^n_R(M))\rightarrow \Tr_R(\Omega^n_R(M))\oplus\Tr_A(M)\rightarrow W\rightarrow 0.
$$

\end{corollary}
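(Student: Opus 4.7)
The plan is to combine \Cref{essential} with the Huang--Huang short exact sequence recalled in \Cref{Gorenstein transpose} via a pushout construction. Under the hypothesis $\RHom_R(A,R)\cong A[-n]$, one has $\Ext^i_R(A,R)=0$ for $i\neq n$ and $\Ext^n_R(A,R)\cong A$ as an $A$-module, so in particular $\Tr_A(M)\otimes_A\Ext^n_R(A,R)\cong \Tr_A(M)$ for every $M\in\mo(A)$. This is the key simplification that allows all three cases of \Cref{essential} to be packaged uniformly.

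First, I would apply \Cref{essential}. Regardless of whether $n=0$, $n=1$, or $n\geq 2$, the three conclusions of that theorem collapse, under the above simplification, to a single short exact sequence
\[
0\longrightarrow K\longrightarrow \Tr^G_R(\Omega^n_R(M))\longrightarrow \Tr_A(M)\longrightarrow 0
\]
in $\mo(R)$ with $\pd_R(K)\leq n-1$ (with $K=0$ in the $n=0$ case). This furnishes the top row of the diagram.

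Next, I would invoke the Huang--Huang embedding recalled in \Cref{Gorenstein transpose}: every Gorenstein transpose embeds into an ordinary transpose with Gorenstein projective cokernel. Applied to $\Tr^G_R(\Omega^n_R(M))$ this yields
\[
0\longrightarrow \Tr^G_R(\Omega^n_R(M))\longrightarrow \Tr_R(\Omega^n_R(M))\longrightarrow C\longrightarrow 0
\]
with $C$ Gorenstein projective over $R$, which supplies the middle column. I would then form the pushout $W$ along the span
\[
\Tr_A(M)\longleftarrow \Tr^G_R(\Omega^n_R(M))\longrightarrow \Tr_R(\Omega^n_R(M)),
\]
in which the left leg is surjective and the right leg is injective. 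Standard properties of the pushout in an abelian category give that $\Tr_A(M)\to W$ is injective with cokernel isomorphic to $C$, producing the right column, and that $\Tr_R(\Omega^n_R(M))\to W$ is surjective with kernel isomorphic to $K$, producing the middle row. Together with the top row and the middle column, these assemble into the required $3\times 3$ diagram with exact rows and columns.

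Finally, for the ``in particular'' claim I would cite the standard Mayer--Vietoris-type exact sequence attached to a pushout square with one injective leg, which in our situation reads
\[
0\longrightarrow \Tr^G_R(\Omega^n_R(M))\longrightarrow \Tr_R(\Omega^n_R(M))\oplus\Tr_A(M)\longrightarrow W\longrightarrow 0,
\]
with appropriately signed maps. I do not anticipate a substantive obstacle in this argument; the only care required is in tracking the three cases of \Cref{essential} to confirm that the projective dimension bound $\pd_R(K)\leq n-1$ holds uniformly, which is immediate since for $n=0$ one has $K=0$, for $n=1$ the kernel $K$ is projective, and for $n\geq 2$ the bound is part of the conclusion of \Cref{essential}(3).
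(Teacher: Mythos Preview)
Your proposal is correct and follows essentially the same approach as the paper: the paper likewise obtains the top row from \Cref{essential} (packaged via \Cref{compare}) and the middle column from the Huang--Huang embedding of \Cref{Gorenstein transpose}, and then forms the pushout of these two maps to produce the diagram. Your explicit derivation of the ``in particular'' sequence via the Mayer--Vietoris property of the pushout is a bit more detailed than the paper, which leaves that step implicit.
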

\begin{proof}
    By Remark \ref{compare}, we get the following short exact sequence in the row with $\pd_R(K)\leq n-1$, and the short exact sequence in the column with $C$ Gorenstein projective:
    $$
\xymatrix{
& & 0\ar[d] & & &\\
0\ar[r]& K\ar[r] &\Tr^G_R(\Omega^n_R(M))\ar[r]^-{f_1}\ar[d]^-{f_2}& \Tr_A(M)\ar[r]& 0\\
&  &\Tr_R(\Omega^n_R(M)\ar[d] & & \\
& & C \ar[d]&& \\
& & 0 & & &
}.
$$ The desired diagram is obtained by taking the pushout of the morphisms $f_1$ and $f_2$.
\end{proof}

\section{Base change of $k$-torsionfree modules}\label{Section 4}
The main result of this section is \Cref{AD}, which implies \Cref{C1} from the introduction. Its proof relies on \Cref{essential}.

\begin{lemma}\label{duality}
     Let $\varphi\colon R\rightarrow A$ be a finite ring homomorphism, where $R$ is a commutative Noetherian ring. Assume $\Gpd_R(A)<\infty$, and there exists an isomorphism $
    \RHom_R(A,R)\cong P[-n] \text{ in }\D(A) \text{ for some } P\in\proj(A) \text{ and }n\geq 0.
    $ Then there exists an isomorphism in $\mo(A^{\rm op}){\rm:}$
        $$
      A\cong \Ext_{R}^n(\Ext^n_R(A,R),R).
        $$

\end{lemma}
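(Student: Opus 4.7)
The plan is to compute $\RHom_R(\RHom_R(A,R),R)$ in two different ways inside the derived category $\D(A^{\rm op})$ of right $A$-modules and match the two expressions. One route recovers $A$ via Gorenstein biduality for modules of finite Gorenstein projective dimension; the other, using the hypothesis $\RHom_R(A,R)\simeq P[-n]$ in $\D(A)$, rewrites the bidual as $\RHom_R(P,R)[n]$. Comparing the two will yield the desired isomorphism.

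First I would establish the biduality isomorphism $A\xrightarrow{\simeq}\RHom_R(\RHom_R(A,R),R)$ in $\D(A^{\rm op})$. Since $\Gpd_R(A)<\infty$, fix a finite Gorenstein projective resolution $0\to G_m\to\cdots\to G_0\to A\to 0$ in $\mo(R)$. Each $G_i$ is totally reflexive, so $\Ext^{\geq 1}_R(G_i,R)=\Ext^{\geq 1}_R(\Hom_R(G_i,R),R)=0$ and the natural map $G_i\xrightarrow{\simeq}\Hom_R(\Hom_R(G_i,R),R)$ is an isomorphism. Therefore $\RHom_R(A,R)$ is represented by $\Hom_R(G_\bullet,R)$, and
$$
\RHom_R(\RHom_R(A,R),R)\simeq \Hom_R(\Hom_R(G_\bullet,R),R)\cong G_\bullet\simeq A
$$
in $\D(R)$. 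Because the biduality map is natural with respect to every $R$-linear endomorphism of $A$, and right multiplication by any $a\in A$ is $R$-linear (since $\varphi(R)$ is central in $A$), this isomorphism lifts to one in $\D(A^{\rm op})$.

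Next I would substitute the hypothesis $\RHom_R(A,R)\simeq P[-n]$ in $\D(A)$ into the outer $\RHom_R$ to obtain
$$
\RHom_R(\RHom_R(A,R),R)\simeq \RHom_R(P[-n],R)\simeq \RHom_R(P,R)[n]
$$
in $\D(A^{\rm op})$, where the right $A$-action on $\RHom_R(P,R)$ is induced by the left $A$-action on $P$. Combining with the biduality step gives $A\simeq \RHom_R(P,R)[n]$ in $\D(A^{\rm op})$. Since $A$ is concentrated in degree zero, this forces $\Ext^i_R(P,R)=0$ for $i\ne n$ and
$$
A\cong H_{-n}\bigl(\RHom_R(P,R)\bigr)=\Ext^n_R(P,R)=\Ext^n_R\bigl(\Ext^n_R(A,R),R\bigr)
$$
in $\mo(A^{\rm op})$, which is the claim.

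The main technical delicacy is keeping track of which $A$-module structure on $\RHom_R(A,R)$ is in play. Since $A$ is an $A$-bimodule and $R$ acts centrally, $\RHom_R(A,R)$ carries two commuting $A$-actions, and the one relevant to the statement $\RHom_R(A,R)\simeq P[-n]$ in $\D(A)$ is the left $A$-structure induced by right multiplication on the source. After applying $\RHom_R(-,R)$ a second time, this produces a right $A$-structure on the bidual that is compatible, via the naturality of the biduality map, with the natural right $A$-structure on $A$. Once this bookkeeping is clear, the derived-category computations above complete the proof.
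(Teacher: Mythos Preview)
Your argument is correct and essentially identical to the paper's: both compute $\RHom_R(\RHom_R(A,R),R)$ in $\D(A^{\rm op})$ via Gorenstein biduality (you spell it out with a finite Gorenstein projective resolution, the paper cites \cite[Corollary~2.3.8]{Christensen-book}) and then substitute $\RHom_R(A,R)\simeq P[-n]$ to read off $\Ext^n_R(\Ext^n_R(A,R),R)$. One minor correction: your parenthetical ``since $\varphi(R)$ is central in $A$'' is neither assumed in the lemma nor needed---right multiplication by $a\in A$ is automatically left $R$-linear on $A$ because $(\varphi(r)x)a=\varphi(r)(xa)$.
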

\begin{proof}
Choose an injective resolution $R\xrightarrow \simeq I$.  In what follows, we identify the derived functor $\RHom_R(-,R)$ with $\Hom_R(-,I)$. By computing $\Ext^n_R(-,R)$ via $\h_{-n}\Hom_R(-,I)$, the module $\Ext^n_R(A,R)$ carries the structure of an $A\mbox{-}R$-bimodule, and hence $\Ext^n_R(\Ext^n_R(A,R),R)$ has the structure of an $R\mbox{-}A$-bimodule. By assumption, $\Ext^i_R(A,R)= 0$ for all $i\neq n$. It follows that $\Hom_R(A,I)\cong \Ext^n_R(A,R)[-n]$ in $\D(A)$. This yields that the second isomorphism in $\D(A^{\rm op})$ below:
    \begin{align*}
     A&\cong \RHom_{R}(\RHom_R(A,R),R)\\
    & \cong\RHom_{R}(\Ext^n_R(A,R)[-n], R)\\
 & \cong \RHom_{R}(\Ext^n_R(A,R), R)[n]\\
 & \cong \Ext^n_{R}(\Ext^n_R(A,R), R)[-n][n]\\
 & \cong \Ext^n_{R}(\Ext^n_R(A,R), R),
    \end{align*}
 where the first one is because    $\Gpd_R(A)<\infty$ (see \cite[Corollary 2.3.8]{Christensen-book}), 
and the fourth one is due to the assumption that $\Ext^i_R(A,R)=0$ for $i\neq n$ and $\Ext^n_R(A,R)$ is projective over $A$. Thus, we get that $A$ and $\Ext^n_R(\Ext^n_R(A,R),R)$ are isomorphic in $\D(A^{\rm op})$. Since the canonical functor $\mo(A^{\rm op})\rightarrow \D(A^{\rm op})$ is fully faithful,  $A$ and $\Ext^n_R(\Ext^n_R(A,R),R)$ are isomorphic in $\mo(A^{\rm op})$. 
\end{proof}

   \begin{chunk}\label{Ext-isomorphism}
       Let $R_1,R_2$ be Noetherian rings, and $M$ be an $R_1\mbox{-}R_2$-bimodule such that $M$ is flat over $R_1$. Then the functor $-\otimes_{R_1}M$ is a triangle functor from $\D(R_1^{\rm op})$ to $\D(R_2^{\rm op})$, with right adjoint $\RHom_{R_2^{\rm op}}(M,-)$. 
       
       For each $X\in\mo(R_1^{\rm op})$, $Y\in\mo(R_2^{\rm op})$, if $\Ext^n_{R_2^{\rm op}}(M,Y)=0$ for all $n>0$, namely $\RHom_{R^{op}_2}(M,Y)\cong \Hom_{R_2^{\rm op}}(M,Y)$ in $\D(R_1^{\rm op})$, then for each $i\geq 0$, there is an isomorphism:
       $$
       \Ext^i_{R_2^{\rm op}}(X\otimes_{R_1} M,Y)\cong \Ext^i_{R_1^{\rm op}}(X,\Hom_{R_2^{\rm op}}(M,Y)).
       $$
   \end{chunk}
   \begin{lemma}\label{changeofring-iso}
       Let $\varphi\colon R\rightarrow A$ be a finite ring homomorphism. Assume $\Gpd_R(A)<\infty$, and  $
    \RHom_R(A,R)\cong P[-n] \text{ in }\D(A) \text{ for some } P\in\proj(A) \text{ and }n\geq 0.
    $ For each $M\in\mo(A)$ and $i>0$, we have{\rm :}
    \begin{enumerate}
        \item  Assume $n=0$. Then $\Ext^i_{R^{\rm op}}(\Tr_R(M),R)\cong \Ext^i_{A^{\rm op}}(\Tr_A(M),A)$.

\item  Assume, in addition, $n\geq 1$ and $R$ is commutative. Then
$$\Ext^{n+i}_{R}(\Tr_R(\Omega^n_R(M)),R)\cong \Ext^i_{A^{\rm op}}(\Tr_A(M),A).$$
\end{enumerate}
\end{lemma}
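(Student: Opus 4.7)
The plan is to reduce both parts to an adjunction/biduality computation, using Theorem \ref{essential} to replace the transpose over $R$ by the more tractable expression $\Tr_A(M)\otimes_A\Ext^n_R(A,R)$, and then exploiting the biduality
$$A\cong \RHom_{R^{\rm op}}(\RHom_R(A,R), R) \quad \text{in } \D(A^{\rm op}),$$
which is available because $\Gpd_R(A)<\infty$ (see \cite[Corollary 2.3.8]{Christensen-book}). The canonical biduality morphism is $A$-bilinear, so the isomorphism actually holds in $\D(A^{\rm op})$, not merely in $\D(R)$.

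For part (1), Theorem \ref{essential}(1) gives $\Tr_R^G(M)\cong \Tr_A(M)\otimes_A\Hom_R(A,R)$ in $\mo(R^{\rm op})$, and the higher $\Ext_{R^{\rm op}}(-,R)$ groups of $\Tr_R(M)$ and $\Tr_R^G(M)$ coincide by \ref{Gorenstein transpose}. Since $\Hom_R(A,R)\cong P$ is projective (hence flat) over $A$, the ordinary tensor product agrees with the derived tensor product, and the standard derived adjunction yields
$$\RHom_{R^{\rm op}}(\Tr_A(M)\otimes_A\Hom_R(A,R), R)\cong \RHom_{A^{\rm op}}\bigl(\Tr_A(M),\, \RHom_{R^{\rm op}}(\Hom_R(A,R), R)\bigr).$$
When $n=0$ we have $\RHom_R(A,R)\cong \Hom_R(A,R)$ in $\D(A)$, so biduality identifies the inner $\RHom_{R^{\rm op}}(\Hom_R(A,R), R)$ with $A$ in $\D(A^{\rm op})$. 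Taking $\h_{-i}$ delivers the desired isomorphism.

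For part (2), combining Theorem \ref{essential}(2) and (3) produces a short exact sequence
$$0\longrightarrow K \longrightarrow \Tr^G_R(\Omega^n_R(M))\longrightarrow \Tr_A(M)\otimes_A\Ext^n_R(A,R) \longrightarrow 0$$
in $\mo(R)$ with $\pd_R(K)\leq n-1$. Applying $\RHom_R(-, R)$ and using the vanishing $\Ext^j_R(K, R)=0$ for $j\geq n$, the resulting long exact sequence gives
$$\Ext^{n+i}_R\bigl(\Tr^G_R(\Omega^n_R(M)), R\bigr)\cong \Ext^{n+i}_R\bigl(\Tr_A(M)\otimes_A\Ext^n_R(A,R), R\bigr)$$
for every $i>0$, and the left-hand side equals $\Ext^{n+i}_R(\Tr_R(\Omega^n_R(M)), R)$ by \ref{Gorenstein transpose}. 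The same adjunction as before (using that $\Ext^n_R(A,R)$ is projective over $A$) yields
$$\RHom_R\bigl(\Tr_A(M)\otimes_A\Ext^n_R(A,R), R\bigr)\cong \RHom_{A^{\rm op}}\bigl(\Tr_A(M),\, \RHom_R(\Ext^n_R(A,R), R)\bigr),$$
and the assumption $\RHom_R(A,R)\cong \Ext^n_R(A,R)[-n]$ together with biduality gives
$$\RHom_R(\Ext^n_R(A,R), R)\cong \RHom_R(\RHom_R(A,R), R)[-n]\cong A[-n] \quad\text{in } \D(A^{\rm op}).$$
Taking $\h_{-(n+i)}$ produces the degree shift $\Ext^{n+i}_R(\Tr_R(\Omega^n_R(M)), R)\cong \Ext^i_{A^{\rm op}}(\Tr_A(M), A)$.

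The only delicate point is bookkeeping: one must ensure that the $A$-module structures on both sides of the adjunction match the $A$-bilinear structure underlying the biduality, so that the final isomorphism lives in $\D(A^{\rm op})$ rather than only in $\D(R)$. This is the step where the hypothesis that $\Ext^n_R(A,R)$ is projective over $A$ is essential, and where commutativity of $R$ in part (2) is used to freely move between $R$- and $R^{\rm op}$-actions.
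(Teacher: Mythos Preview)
Your proposal is correct and follows essentially the same route as the paper: both proofs invoke Theorem~\ref{essential} to rewrite the relevant transpose as $\Tr_A(M)\otimes_A\Ext^n_R(A,R)$, pass through the Gorenstein-transpose comparison of \ref{Gorenstein transpose}, apply Hom--tensor adjunction, and finish with the biduality $A\cong\RHom_{R^{\rm op}}(\RHom_R(A,R),R)$ available from $\Gpd_R(A)<\infty$. The only cosmetic difference is that the paper isolates the biduality step for part~(2) as a separate lemma (Lemma~\ref{duality}, proving $A\cong\Ext^n_R(\Ext^n_R(A,R),R)$ in $\mo(A^{\rm op})$) and in part~(1) uses a non-derived adjunction together with the observation that $\Hom_R(A,R)$ is Gorenstein projective over $R^{\rm op}$, whereas you work uniformly at the derived level throughout; the content is the same.
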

\begin{proof}
By \ref{gp} and the assumption, $\Gpd_R(A)=n$.

(1) By \Cref{essential}(1), we get the second isomorphism below:
\begin{align*}
    \Ext^i_{R^{\rm op}}(\Tr_R(M),R)
    & \cong \Ext^i_{R^{\rm op}}(\Tr^G_R(M),R)\\
    & \cong \Ext^i_{R^{\rm op}}(\Tr_A(M)\otimes_A\Hom_R(A,R),R)\\
    & \cong \Ext^i_{A^{\rm op}}(\Tr_A(M),\Hom_{R^{\rm op}}(\Hom_R(A,R),R))\\
    & \cong \Ext^i_{A^{\rm op}}(\Tr_A(M),A)
\end{align*}
where the first isomorphism follows from \ref{Gorenstein transpose}, the third one follows from \Cref{Ext-isomorphism} as $\Hom_R(A, R)$ is projective over $A$ and that $\Ext^j_{R^{\rm op}}(\Hom_R(A, R), R) = 0$ for all $j > 0$ (the vanishing here holds because $\Hom_R(A, R)$ is Gorenstein projective over $R^{\rm op}$, given that $A$ is Gorenstein projective over $R$), and the last one is due to the $A^{\rm op}$-linear isomorphism
$
A\cong \Hom_{R^{\rm op}}(\Hom_R(A,R),R);
$
the isomorphism here holds because $A$ is Gorenstein projective over $R$.

(2) By \Cref{essential}(2)(3), there exists a short exact sequence in $\mo(R)$:
$$
0\rightarrow K\rightarrow \Tr^G_R(\Omega^n_R(M))\rightarrow \Tr_A(M)\otimes_A \Ext^n_R(A,R)\rightarrow 0,
$$
where $\pd_R(K)\leq n-1$.
Combining this with that $i>0$, we get the second isomorphism below:
\begin{align*}
    \Ext^{n+i}_{R}(\Tr_R(\Omega^n_{R}(M)),R)&\cong \Ext^{n+i}_{R}(\Tr^G_R(\Omega^n_R(M)),R)\\
    & \cong \Ext^{n+i}_{R}(\Tr_A(M)\otimes_A\Ext^n_R(A,R),R)\\
    &=\h_{-(n+i)}\RHom_R(\Tr_A(M)\otimes_A\Ext^n_R(A,R),R))\\
    &=\h_{-(n+i)}\RHom_{A^{\rm op}}(\Tr_A(M),\RHom_{R}(\Ext^n_R(A,R),R))\\
    &=\h_{-(n+i)}\RHom_{A^{\rm op}}(\Tr_A(M),\Ext_{R}^n(\Ext^n_R(A,R),R)[-n])\\
   & =\h_{-(n+i)}\RHom_{A^{\rm op}}(\Tr_A(M),A[-n])\\
    &=\Ext^i_{A^{\rm op}}(\Tr_A(M),A)
\end{align*}
where the first one is by \ref{Gorenstein transpose}, the fourth one is by the adjunction $(-\otimes_A^{\rm L}\Ext^n_R(A,R),\RHom_R(\Ext^n_R(A,R),-))$ and $\Ext^n_R(A,R)$ is projective over $A$, the fifth one is because $\Ext_R^n(A,R)$ is projective over $A$ and $\Ext^i_A(A,R)=0$ for $i\neq n$, and the sixth one is due to \Cref{duality}.
\end{proof}
\begin{chunk}\label{MTT-result}
    Let $R$ be a commutative Noetherian ring. Recall that a finitely generated $R$-module $M$ is said to satisfy \emph{Serre's condition} $(S_n)$ if $\depth(M_\p)\geq \min\{n,\dim(R_\p)\}$ for all prime ideals $\p$ of $R$, where $\depth(M_\p)$ denotes the depth of $M_\p$ over $R_\p$.
     Recall also that  $R$ is said to satisfy the condition $(\widetilde{G}_n)$ provided that $R_\p$ is Gorenstein for all prime ideals $\p$ with $\depth(R_\p)\leq n$.
   
   For each $m>0$, Matsui, Takahashi, and Tsuchiya \cite[Theorem 1.4]{MTT} proved that the following conditions are equivalent:
    \begin{enumerate}
        \item $R$ satisfies $(\widetilde{G}_{m-1})$.
        \item For each finitely generated $R$-module $M$, one has:
        \begin{center}
            $M$ is $m$-torsionfree $\iff$ $M$ is $m$-th syzygy $\iff$ $M$ satisfies $(S_m)$.
        \end{center}
    \end{enumerate}
\end{chunk}

\begin{theorem}\label{AD}
 Let $\varphi\colon R\rightarrow A$ be a finite ring homomorphism. Assume $\Gpd_R(A)<\infty$, and  $
    \RHom_R(A,R)\cong P[-n] \text{ in }\D(A) \text{ for some } P\in\proj(A) \text{ and }n\geq 0.
    $ For each $M\in\mo(A)$ and $k>0$, we have{\rm :}
 \begin{enumerate}
\item Assume $n=0$. Then $M$ is $k$-torsionfree over $A$ if and only if $M$ is $k$-torsionfree over $R$.

\item   Assume $n\geq 1$ and $R$ is commutative. Consider the following conditions{\rm :}
\begin{enumerate}
    \item \label{111}
    $M$ is $k$-torsionfree over $A$

    \item\label{222}
    $\Tr_R\Omega^n_R\Tr_R\Omega^n_R(M)$ is $k$-torsionfree over $R$.

    \item \label{333}
    $\Omega^n_R(M)$ is $(k+n)$-torsionfree over $R$.
\end{enumerate}
Then $(\ref{333})\Longrightarrow (\ref{111})\iff (\ref{222})$. All these conditions are equivalent if, in addition, 
$R$ satisfies $(\widetilde{G}_{n-1})$.
    \end{enumerate}
    \end{theorem}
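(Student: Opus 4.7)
The plan is to deduce both parts of the theorem directly from \Cref{changeofring-iso}, supplemented by the classical syzygy-shift formula and the identity $\Tr_R\Tr_R\cong\id$ modulo projective summands. The only external ingredient needed is the characterization \ref{MTT-result} of Matsui--Takahashi--Tsuchiya, which I will invoke solely for the last clause of (2).

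For statement (1), the proof is essentially immediate: \Cref{changeofring-iso}(1) provides, for every $i>0$, an isomorphism $\Ext^i_{R^{\rm op}}(\Tr_R(M),R)\cong \Ext^i_{A^{\rm op}}(\Tr_A(M),A)$, and the definition of $k$-torsionfreeness then yields the equivalence.

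For statement (2), set $N:=\Tr_R\Omega^n_R(M)$. By \Cref{changeofring-iso}(2), condition (\ref{111}) is equivalent to the vanishing
$$\Ext^j_R(N,R)=0\quad\text{for } n+1\leq j\leq n+k.\qquad (\ast)$$
The implication (\ref{333})$\Rightarrow$(\ref{111}) is then automatic, since $(n+k)$-torsionfreeness of $\Omega^n_R(M)$ is precisely $\Ext^j_R(N,R)=0$ for all $1\leq j\leq n+k$, which contains $(\ast)$. For (\ref{111})$\iff$(\ref{222}), I will combine two standard facts: first, the syzygy exact sequence together with projectivity of each $P_i$ yields $\Ext^{n+i}_R(N,R)\cong \Ext^i_R(\Omega^n_R N,R)$ for $i\geq 1$; second, since $\Tr_R\Tr_R(X)\cong X$ modulo projective summands, the groups $\Ext^i_R(\Tr_R\Tr_R X,R)$ and $\Ext^i_R(X,R)$ agree in positive degrees. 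Applying these to $X=\Omega^n_R N$, condition (\ref{222}), which by definition reads $\Ext^i_R(\Tr_R L,R)=0$ for $1\leq i\leq k$ with $L=\Tr_R\Omega^n_R N$, translates exactly into $(\ast)$, establishing the equivalence with (\ref{111}).

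For the final clause, under the hypothesis $(\widetilde{G}_{n-1})$ on $R$, the equivalence \ref{MTT-result} tells us that every $n$-th syzygy module is $n$-torsionfree. Applied to $\Omega^n_R(M)$ this produces the additional vanishing $\Ext^j_R(N,R)=0$ for $1\leq j\leq n$, and combining this with $(\ast)$ extends the range to $1\leq j\leq n+k$, which is precisely (\ref{333}). The main subtlety to keep in view throughout is the bookkeeping of $R$ versus $R^{\rm op}$: $\Tr_R$ interchanges left and right module structures, so the syzygies appearing inside $\Tr_R\Omega^n_R\Tr_R\Omega^n_R(M)$ are alternately taken over $R$ and $R^{\rm op}$. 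None of the steps above is sensitive to this beyond careful reading, but it is the point that demands the most attention in writing up the details.
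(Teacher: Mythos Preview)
Your proposal is correct and follows essentially the same route as the paper: both parts are deduced from \Cref{changeofring-iso}, with the equivalence (\ref{111})$\iff$(\ref{222}) obtained via the syzygy-shift $\Ext^{n+i}_R(N,R)\cong\Ext^i_R(\Omega^n_R N,R)$ and the stable identity $\Tr_R\Tr_R\cong\id$, and the final clause via \ref{MTT-result}. The only difference is cosmetic: the paper compresses the chain of isomorphisms for (\ref{111})$\iff$(\ref{222}) into a single displayed line, whereas you make the two standard identities explicit; your remark about $R$ versus $R^{\rm op}$ is a non-issue in part~(2) since $R$ is assumed commutative there.
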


\begin{proof}
(1) This follows immediately from \Cref{changeofring-iso}(1).

(2) $(\ref{333})\Longrightarrow (\ref{111})$. Assume $\Omega^n_R(M)$ is $(k+n)$-torsionfree. By definition, we have $\Ext^j_R(\Tr_R\Omega^n_R(M),R)=0$ for all $1\leq j\leq k+n$. Combining this with \Cref{changeofring-iso}(2), $\Ext^i_{A^{\rm op}}(\Tr_A(M),A)=0$ for $1\leq i\leq k$, and hence $M$ is $k$-torsionfree over $A$.

$(\ref{111})\iff (\ref{222})$. By \Cref{changeofring-iso}(2), we have
$$
\Ext^i_{R}(\Tr_R\Tr_R\Omega^n_R\Tr_R\Omega^n_R(M),R)\cong \Ext^i_{A^{\rm op}}(\Tr_A(M),A)
$$
for each $i>0$.  In particular, $M$ is $k$-torsionfree over $A$ if and only if $\Tr_R\Omega^n_R\Tr_R\Omega^n_R(M)$ is $k$-torsionfree over $R$.

Assume, in addition, $R$ satisfies $(\widetilde{G}_{n-1})$. 
Combining with \ref{MTT-result}, we get that the module $\Omega^n_R(M)$ is $n$-torsionfree. Thus, $\Ext^i_R(\Tr_R\Omega^n_R(M),R)=0$ for $1\leq i\leq n$. Assume $(\ref{111})$ holds, then \Cref{changeofring-iso}(2) yields that $\Ext_R^{n+i}(\Tr_R(\Omega^n_R(M)),R)=0$ for $1\leq i\leq k$. Thus, we conclude that $\Ext^i_R(\Tr_R\Omega^n_R(M),R)=0$ for $1\leq i\leq n+k$. That is, $\Omega^n_R(M)$ is $(k+n)$-torsionfree over $R$.
\end{proof}
\begin{remark}\label{recover-Zhao}
 (1) When $\varphi\colon R \rightarrow A$ is further assumed to be a Frobenius extension, \Cref{AD}(1) was proved by Zhao \cite[Theorem A]{Zhao2024} using a different approach. Specifically, Zhao’s proof relies on a $\proj(R)$-approximation characterization of $k$-torsionfree modules, established by Auslander and Bridger \cite[Theorem 2.17]{Auslander-Bridger}.

 (2)  Let $\varphi\colon R\rightarrow A$ be a finite ring homomorphism, where $R$ is a commutative Noetherian ring and $A$ is an  $R$-algebra via $\varphi$. Assume $\varphi$ has ascent and descent of Gorenstein projective property in the sense of \cite{Liu-Ren}. For each $M\in\mo(A)$ and $k>0$, by \cite[Theorem 1.1]{Liu-Ren} and \Cref{AD}(1), $M$ is $k$-torsionfree over $A$ if and only if $M$ is $k$-torsionfree over $R$.
\end{remark}

Recall that a surjective ring homomorphism $\pi\colon R \rightarrow A$ between commutative Noetherian rings is called a \emph{complete intersection map} if the kernel of $\pi$ is generated by a regular sequence in $R$. When $\pi$ is a complete intersection map, $\pd_R(A)$ is finite.

Combining with \Cref{AD-Gorenstein dimension}, the following result follows from \Cref{AD}(2).
\begin{corollary}\label{AD-G-condition-torsionfree}
    Let $\varphi\colon R\rightarrow A$ be a finite ring homomorphism between commutative Noetherian rings, where $A$ is local. Assume $\varphi$ has ascent and descent of finite Gorenstein dimension property (cf. \ref{def-of-ad-gdim}) and $\Gpd_R(A)=n$. If $R$ satisfies $(\widetilde{G}_{n-1})$, then for each $M\in\mo(A)$ and $k>0$,  the module $M$ is $k$-torsionfree over $A$ if and only if $\Omega^n_R(M)$ is $(k+n)$-torsionfree over $R$.
\end{corollary}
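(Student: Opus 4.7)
The plan is to deduce this as a direct corollary of \Cref{AD} by verifying its hypotheses via \Cref{AD-Gorenstein dimension}. First I would invoke \Cref{AD-Gorenstein dimension}(2): since $\varphi\colon R\to A$ is a finite ring homomorphism between commutative Noetherian rings, $A$ is local, and $\varphi$ has ascent and descent of finite Gorenstein dimension, this gives $\Gpd_R(A)=n$ and an isomorphism $\RHom_R(A,R)\cong A[-n]$ in $\D(A)$. Taking $P\colonequals A$, which is trivially finitely generated projective over $A$, the full set of hypotheses of \Cref{AD} is in force.

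Next I would split on the value of $n$. When $n\geq 1$, the standing assumption that $R$ satisfies $(\widetilde{G}_{n-1})$ activates the final clause of \Cref{AD}(2), which yields the equivalence of conditions (a) and (c) there; that equivalence is exactly the claim that $M$ is $k$-torsionfree over $A$ if and only if $\Omega^n_R(M)$ is $(k+n)$-torsionfree over $R$. When $n=0$, the statement to be proved reduces to ``$M$ is $k$-torsionfree over $A$ if and only if $M$ is $k$-torsionfree over $R$'' (using the convention $\Omega^0_R(M)=M$), which is exactly the content of \Cref{AD}(1); here the condition $(\widetilde{G}_{-1})$ is vacuous and imposes nothing.

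The main obstacle is not located in this corollary itself, which is essentially a bookkeeping step, but in the preceding results upon which it rests: namely, the derived-category identification $\RHom_R(A,R)\cong A[-n]$ provided by the results of \cite{Liu-Ren} together with Christensen's perfectness/duality criteria, and the homological content of \Cref{AD}(2), whose proof passes through the isomorphism $\Ext^{n+i}_R(\Tr_R\Omega^n_R(M),R)\cong \Ext^i_{A^{\rm op}}(\Tr_A(M),A)$ of \Cref{changeofring-iso} and the $(\widetilde{G}_{n-1})$-characterization of Matsui--Takahashi--Tsuchiya recalled in \ref{MTT-result}. Once those tools are assembled, the corollary is immediate.
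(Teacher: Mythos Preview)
Your argument is correct and follows the paper's own approach: the paper proves this corollary in a single sentence, observing that \Cref{AD-Gorenstein dimension}(2) supplies the hypotheses of \Cref{AD}(2). Your treatment is slightly more careful in that you separate out the case $n=0$ and invoke \Cref{AD}(1) there, a case the paper's terse proof does not single out explicitly.
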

\begin{corollary}\label{AD-Gorenstein rings}
    Let $\varphi\colon R\rightarrow A$ be a finite ring homomorphism between commutative Noetherian rings. Suppose that $n=\Gpd_R(A)$ and  $\varphi$ satisfies one of the following conditions{\rm :}
    \begin{enumerate}
        \item $R$ and $A$ are commutative Gorenstein rings, and $A$ is also local.

        \item  $\varphi\colon R\rightarrow A$ is a complete intersection map, and $R$ satisfies $(\widetilde{G}_{n-1})$.
    \end{enumerate}
  Then for each $M\in\mo(A)$ and $k>0$, the module
$M$ is $k$-torsionfree over $A$ if and only if $\Omega^n_R(M)$ is $(k+n)$-torsionfree over $R$.
\end{corollary}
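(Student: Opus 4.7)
The plan is to deduce the corollary from Theorem \ref{AD}(2) in both cases. That theorem requires three hypotheses: (i) $\Gpd_R(A)<\infty$, (ii) $\RHom_R(A,R)\cong P[-n]$ in $\D(A)$ for some $P\in\proj(A)$, and (iii) $R$ satisfies $(\widetilde{G}_{n-1})$. Once these are verified, the equivalence $(\ref{111})\Leftrightarrow(\ref{333})$ supplied by Theorem \ref{AD}(2) is precisely the desired biconditional. Hypothesis (i) is part of the standing assumption $n=\Gpd_R(A)$, so only (ii) and (iii) actually require work.

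For case (2), I would write $A=R/(x_1,\ldots,x_n)$ for an $R$-regular sequence $x_1,\ldots,x_n$ and use the Koszul complex $K^\bullet(x_1,\ldots,x_n;R)$ as a finite free resolution of $A$ over $R$ of length $n$. The self-duality $\Hom_R(K^\bullet,R)\cong K^\bullet[-n]$ immediately gives $\RHom_R(A,R)\cong A[-n]$ in $\D(A)$, which is (ii) with $P=A$, while (iii) is part of the hypothesis. Applying Theorem \ref{AD}(2) then finishes this case.

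For case (1), the Gorensteinness of $R$ makes (iii) automatic, since each localization $R_\p$ is Iwanaga-Gorenstein and so $R$ satisfies $(\widetilde{G}_m)$ for every $m\geq 0$. To establish (ii), I would reduce to the local setting via the factorization $R\to R_\p\to A$ with $\p=\varphi^{-1}(\m_A)$, which is legal because every element of $R\setminus\p$ is mapped into $A\setminus\m_A\subseteq A^\times$. The induced map $R_\p\to A$ is then a finite local homomorphism between Gorenstein local rings, so Example \ref{AD-Gorenstein dimension}(2) supplies an isomorphism $\RHom_{R_\p}(A,R_\p)\cong A[-n]$ in $\D(A)$. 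Since $\RHom_R(A,R)$ carries an $A$-module structure on which the elements of $R\setminus\p$ already act invertibly, it is unchanged under tensoring with $R_\p$; hence $\RHom_R(A,R)\cong\RHom_{R_\p}(A,R_\p)\cong A[-n]$ in $\D(A)$, giving (ii).

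The main (and only) obstacle is this localization step in case (1): one must justify that passing from $R$ to $R_\p$ leaves $\RHom_R(A,R)$ unchanged in $\D(A)$. This follows from flatness of the localization together with the observation that any $A$-module is automatically $R_\p$-local in this setting. All other ingredients, notably the Koszul computation in case (2) and the verification of $(\widetilde{G}_{n-1})$ in case (1), are formal consequences of the given hypotheses.
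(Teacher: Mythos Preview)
Your argument is correct in both cases, and for case~(2) it coincides with the paper's proof (which likewise invokes the Koszul self-duality $\RHom_R(A,R)\cong A[-n]$, citing \cite[Proposition~1.6.10 and Corollary~1.6.14]{BH}, and then appeals to Theorem~\ref{AD}(2)).

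For case~(1) you take a slightly longer route than the paper. You localize $R$ at $\p=\varphi^{-1}(\m_A)$ in order to put yourself in the situation of a finite \emph{local} homomorphism between Gorenstein local rings, and then invoke Example~\ref{AD-Gorenstein dimension}(2). This works, but the localization is unnecessary: Example~\ref{AD-Gorenstein dimension}(2) only asks that $A$ be local and that $\varphi$ satisfy the ascent/descent of finite Gorenstein dimension property. The paper simply notes that when both $R$ and $A$ are Gorenstein, every object of $\D^f_b(R)$ and $\D^f_b(A)$ has finite Gorenstein dimension (\cite[Corollary~2]{Goto}), so the ascent/descent property holds trivially for $\varphi$ itself. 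One can therefore apply Corollary~\ref{AD-G-condition-torsionfree} directly, without passing to $R_\p$. Your detour costs you the extra verification that $\RHom_R(A,R)\cong\RHom_{R_\p}(A,R_\p)$ in $\D(A)$ and that $\Gpd_R(A)=\Gpd_{R_\p}(A)$; both are true for the reason you indicate (elements of $R\setminus\p$ already act invertibly on any $A$-module), but the paper's path avoids having to say any of this.
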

\begin{proof}
  (1) Since $R$ and $A$ are Gorenstein, any finitely generated module over $R$ or $A$ has finite Gorenstein dimension; see \cite[Corollary 2]{Goto}. In particular, $\Gpd_R(A)<\infty$ and $\varphi$ has ascent and descent of finite Gorenstein dimension property. The desired result now follows from \Cref{AD-G-condition-torsionfree}.

  (2) Note that $n=\pd_R(A)$ as $\varphi$ is a complete intersection map. Moreover, $\RHom_R(A,R)\cong A[-n]$ in $\D(A)$; see \cite[Proposition 1.6.10 and Corollary
1.6.14]{BH}. The desired result follows from \Cref{AD}(2).
\end{proof}
\begin{remark}
    Keep the assumption as \Cref{AD-Gorenstein rings}(1). Combining with \ref{MTT-result}, it follows from \Cref{AD-Gorenstein rings}(1) that, for each $M\in\mo(A)$ and $k>0$, the module $M$ is a $k$-th syzygy in $\mo(A)$ if and only if $\Omega^n_R(M)$ is a $(k+n)$-th syzygy in $\mo(R)$, where $n=\Gpd_R(A)$.
\end{remark}

The following example satisfies the assumption of \Cref{AD-Gorenstein rings}(2), but does not satisfy the assumption of \Cref{AD-Gorenstein rings}(1).
\begin{example}
    Let $R=k[t^3,t^4,t^5]$ be a numerical semigroup ring, where $k$ is a field. For each nonzero, non-invertible element $r\in R$, consider the canonical surjection
    $$
    \pi\colon R\rightarrow A\colonequals R/(r).
    $$
Since $R$ is a one-dimensional domain, we get that $\varphi$ is a complete intersection map and $R_{(0)}$ is a field. It follows that $R$ satisfies $(\widetilde{G}_0)$. Thus, for each $M\in\mo(A)$ and $k>0$, \Cref{AD-Gorenstein rings}(2) yields that the module $M$ is $k$-torsionfree over $A$ if and only if $\Omega^1_R(M)$ is $(k+1)$-torsionfree over $R$. 
Note that the semigroup $\langle 3,4,5\rangle$ is not symmetric in the sense of \cite[Theorem 4.4.8]{BH}, and hence $R$ is not Gorenstein. 
\end{example}
\section{Extension closedness of $k$-torsionfree modules}\label{Section 5}
In this section, we investigate the extension closedness of the category of \( k \)-torsionfree modules under change of rings. The main result is \Cref{T2} from the introduction; see \Cref{AD of extension closedness}. As an application, we establish an ascent and descent result concerning quasi \( k \)-Gorensteinness; see \Cref{AD-quasi-Gorenstein}. This provides an affirmative answer to a question posed by Zhao regarding quasi \( k \)-Gorensteinness in the case of Noetherian algebras; see \Cref{affirmative}.

\begin{lemma}\label{transfer transpose}
    Let $R\rightarrow A$ be a ring homomorphism and $X$ be a finitely generated module over $R$. Assume $T$ is an $A\mbox{-}R$-bimodule and $T$ is finitely generated projective over $A$. Then there is an isomorphism in $\mo(A^{\rm op}){\rm:}$
    $$
     \Tr_R(X)\otimes_R \Hom_A(T,A)\cong \Tr_A(T\otimes_R X).
    $$
\end{lemma}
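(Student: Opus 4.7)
The plan is to directly compute both sides from a finitely generated projective presentation of $X$ and check they agree. Let $P_1 \xrightarrow{f} P_0 \to X \to 0$ be the start of a finitely generated projective resolution of $X$ over $R$, so that by definition $\Tr_R(X) = \Coker(\Hom_R(f, R))$.

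First, I would verify that $T \otimes_R P_1 \xrightarrow{T\otimes f} T \otimes_R P_0 \to T \otimes_R X \to 0$ is the start of a finitely generated projective resolution of $T \otimes_R X$ over $A$. Right-exactness of $T \otimes_R -$ gives exactness for free; for the projectivity of $T \otimes_R P_i$ over $A$, note that each $P_i$ is a summand of some finitely generated free $R$-module $R^{n_i}$, so $T \otimes_R P_i$ is a summand of $T^{n_i}$, and $T$ is finitely generated projective over $A$. Hence $\Tr_A(T \otimes_R X) \cong \Coker\bigl(\Hom_A(T \otimes_R f, A)\bigr)$.

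Next, I would identify $\Hom_A(T \otimes_R P_i, A)$ with $P_i^{\ast} \otimes_R \Hom_A(T, A)$, where $(-)^{\ast} = \Hom_R(-, R)$. The hom–tensor adjunction yields
\[
\Hom_A(T \otimes_R P_i, A) \;\cong\; \Hom_R\bigl(P_i, \Hom_A(T, A)\bigr),
\]
which is a right $A$-module via the right $A$-action on $\Hom_A(T,A)$. Since $P_i$ is finitely generated projective over $R$, the evaluation map gives the natural isomorphism $\Hom_R(P_i, N) \cong \Hom_R(P_i, R) \otimes_R N$ for every left $R$-module $N$; applied to $N = \Hom_A(T,A)$ (regarded as $R$-$A$-bimodule via the right $R$-action on $T$), this produces the desired identification, functorial in $P_i$. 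Hence the map $\Hom_A(T \otimes_R f, A)$ becomes identified with $f^{\ast} \otimes_R \Hom_A(T, A)$.

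Finally, by right-exactness of $- \otimes_R \Hom_A(T, A)$ applied to the right-exact sequence
\[
P_0^{\ast} \xrightarrow{f^{\ast}} P_1^{\ast} \longrightarrow \Tr_R(X) \longrightarrow 0
\]
of right $R$-modules, the cokernel of $f^{\ast} \otimes_R \Hom_A(T, A)$ is $\Tr_R(X) \otimes_R \Hom_A(T, A)$. Combining the three cokernel identifications produces the claimed isomorphism in $\mo(A^{\mathrm{op}})$. The only mild obstacle is bookkeeping: tracking the right $A$-module structures through the adjunction and the evaluation map so that the final isomorphism is genuinely $A^{\mathrm{op}}$-linear rather than merely an isomorphism of abelian groups; but since every step is natural in $P_i$ and the $A$-action comes uniformly from the second argument, this poses no serious difficulty.
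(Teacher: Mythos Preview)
Your proposal is correct and follows essentially the same approach as the paper: choose a finite projective presentation of $X$, apply $T\otimes_R-$ to obtain one for $T\otimes_R X$, use the adjunction $\Hom_A(T\otimes_R P_i,A)\cong\Hom_R(P_i,\Hom_A(T,A))$, and compare cokernels. The only difference is cosmetic: where the paper cites \cite[Proposition~4.3]{Takahashi2013} to identify the cokernel of $\Hom_R(P_0,\Hom_A(T,A))\to\Hom_R(P_1,\Hom_A(T,A))$ with $\Tr_R(X)\otimes_R\Hom_A(T,A)$, you supply this directly via the evaluation isomorphism $\Hom_R(P_i,N)\cong P_i^\ast\otimes_R N$ for finitely generated projective $P_i$ together with right-exactness of the tensor product.
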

\begin{proof}
    Choose a projective resolution of $X$ over $R$:
$$
P_1\xrightarrow {d_1} P_0\xrightarrow{d_0} X\rightarrow 0,
$$
where $P_0, P_1\in\proj(R)$.
Note that $T$ is finitely generated projective over $A$. This yields that $T\otimes_R P_i$ is finitely generated projective over $A$ for $i=0,1$. By applying the functor $T\otimes_R-$, we get a projective resolution of $T\otimes_R X$ in $\mo(A)$:
$$
T\otimes_RP_1\xrightarrow {T\otimes_R d_1} T\otimes_RP_0\xrightarrow{T\otimes_R d_0} T\otimes_RX\rightarrow 0.
$$
The above short exact sequences induce the following commutative diagrams in $\mo(A^{\rm op})$ with exact rows:
$$\xymatrix{
\Hom_A(T\otimes_R P_0,A)\ar[r]\ar[d]^-\cong&  \Hom_A(T\otimes_R P_1,A)\ar[r]\ar[d]^-\cong & \Tr_A(T\otimes_R X)\ar[r]\ar[d]& 0\\
\Hom_R(P_0,\Hom_A(T,A))\ar[r] & \Hom_R(P_1,\Hom_A(T,A))\ar[r] & C\ar[r]& 0
,}
$$
where the two isomorphisms are due to the adjunction $(T\otimes_R-, \Hom_A(T,-))$. By the Five lemma, we have $\Tr_A(A\otimes_RX)\cong C$ in $\mo(A^{\rm op})$. By \cite[Propositon 4.3]{Takahashi2013}, $C\cong \Tr_R(X)\otimes_R \Hom_A(T,A)$ in $\mo(A^{\rm op})$, and hence 
$\Tr_R(X)\otimes_R \Hom_S(T,A)\cong \Tr_A(T\otimes_R X)$ in $\mo(A^{\rm op})$. 
\end{proof}

\begin{lemma}\label{change-tensor-torsionfree}
    Let $\varphi\colon R\rightarrow A$ be a ring homomorphism. Assume $T$ is an $A\mbox{-}R$-bimodule such that $T$ is finitely generated projective over $A$, $T$ is flat over $R^{\rm op}$, and   $\Hom_A(T,A)$ is flat over $R$. Let $X$ be a finitely generated $R$-module and $k>0$. If $X$ is $k$-torsionfree over $R$, then $T\otimes_R X$ is $k$-torsionfree over $A$. The converse holds if, in addition, $T$ is faithfully flat over $R^{\rm op}$. 
\end{lemma}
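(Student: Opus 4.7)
The plan is to leverage Lemma \ref{transfer transpose} to identify the transpose of $T\otimes_R X$ over $A$ with an explicit tensor product, and then to convert the vanishing of $\Ext$ groups over $A$ into vanishing of $\Ext$ groups over $R$ using the flatness hypotheses on $T$ and $\Hom_A(T,A)$.

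First I would fix a projective resolution $Q_\bullet\to\Tr_R(X)$ by finitely generated projective right $R$-modules and invoke Lemma \ref{transfer transpose} to identify $\Tr_A(T\otimes_R X)$ with $\Tr_R(X)\otimes_R\Hom_A(T,A)$ as right $A$-modules. Since $\Hom_A(T,A)$ is flat as a left $R$-module, the complex $Q_\bullet\otimes_R\Hom_A(T,A)$ is acyclic in positive degrees and augments to $\Tr_A(T\otimes_R X)$. Moreover, $\Hom_A(T,A)$ is finitely generated projective over $A^{\mathrm{op}}$ (since $T$ is finitely generated projective over $A$), so each term $Q_i\otimes_R\Hom_A(T,A)$ is projective over $A^{\mathrm{op}}$. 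Thus this is a genuine projective resolution, which I would use to compute $\Ext_A^\ast(\Tr_A(T\otimes_R X),A)$.

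Second, I would apply the Hom-tensor adjunction combined with the canonical isomorphism $\Hom_{A^{\mathrm{op}}}(\Hom_A(T,A),A)\cong T$ (which holds because $T$ is finitely generated projective over $A$) to obtain a chain isomorphism $\Hom_{A^{\mathrm{op}}}(Q_\bullet\otimes_R\Hom_A(T,A),A)\cong\Hom_{R^{\mathrm{op}}}(Q_\bullet,T)$. For each finitely generated projective right $R$-module $Q_i$, the natural map
\[
T\otimes_R\Hom_{R^{\mathrm{op}}}(Q_i,R)\longrightarrow\Hom_{R^{\mathrm{op}}}(Q_i,T),\qquad t\otimes f\mapsto\bigl(q\mapsto t\cdot f(q)\bigr),
\]
is an isomorphism (check on $Q_i=R^n$ and pass to summands), yielding an isomorphism of complexes $\Hom_{R^{\mathrm{op}}}(Q_\bullet,T)\cong T\otimes_R\Hom_{R^{\mathrm{op}}}(Q_\bullet,R)$. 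Since $T$ is flat over $R^{\mathrm{op}}$, tensoring commutes with cohomology, so altogether
\[
\Ext^i_{A^{\mathrm{op}}}(\Tr_A(T\otimes_R X),A)\cong T\otimes_R\Ext^i_{R^{\mathrm{op}}}(\Tr_R(X),R)
\]
for every $i\geq 1$.

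With this identification in hand, both directions of the lemma are immediate. If $X$ is $k$-torsionfree over $R$, the right-hand side vanishes for $1\leq i\leq k$, and hence $T\otimes_R X$ is $k$-torsionfree over $A$. For the converse, the hypothesis that $T$ is faithfully flat over $R^{\mathrm{op}}$ ensures $T\otimes_R M=0$ forces $M=0$, so the vanishing of the left-hand side propagates back to $\Ext^i_{R^{\mathrm{op}}}(\Tr_R(X),R)=0$, showing $X$ is $k$-torsionfree over $R$. The main delicate point is the bookkeeping of left/right and bimodule structures to ensure each tensor product and Hom space type-checks and the adjunctions apply; once that is in place the argument is entirely formal.
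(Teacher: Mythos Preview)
Your proposal is correct and follows essentially the same route as the paper: both arguments combine Lemma~\ref{transfer transpose} with the adjunction $(-\otimes_R\Hom_A(T,A),\Hom_{A^{\rm op}}(\Hom_A(T,A),-))$ and the reflexivity isomorphism $T\cong\Hom_{A^{\rm op}}(\Hom_A(T,A),A)$ to reach the key identity $\Ext^i_{A^{\rm op}}(\Tr_A(T\otimes_R X),A)\cong T\otimes_R\Ext^i_{R^{\rm op}}(\Tr_R(X),R)$. The only difference is cosmetic: the paper packages the middle step as the derived-category Ext isomorphism recorded in \ref{Ext-isomorphism} and cites \cite[A.4.23]{Christensen-book} for the flat-base-change of Ext, whereas you unwind both via an explicit projective resolution of $\Tr_R(X)$.
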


\begin{proof}
    By assumption, $T$ is a finitely generated projective module over $A$. It follows that $\Hom_A(T,A)$ is a finitely generated projective module over $A^{\rm op}$ and there exists an isomorphism in $\mo(R^{\rm op})$:
    \begin{equation}\label{reflexive of T}
        T\xrightarrow \cong \Hom_{A^{\rm op}}(\Hom_A(T,A),A).
    \end{equation}
Combining this with that $\Hom_A(T,A)$ is flat over $R$, 
by \Cref{Ext-isomorphism}, for each $i\geq 0$, we obtain the second isomorphism below:
    \begin{align*}
        \Ext^i_{A^{\rm op}}(\Tr_A(T\otimes_R X),A)& \cong \Ext^i_{A^{\rm op}}( \Tr_R(X)\otimes_R \Hom_A(T,A),A)\\
        & \cong\Ext^i_{R^{\rm op}}(\Tr_R(X),T)\\
        & \cong T\otimes_R \Ext^i_{R^{\rm op}}(\Tr_R(X), R),
    \end{align*}
    where the first isomorphism is by \Cref{transfer transpose}, and the last uses that $T$ is flat over $R^{\rm op}$ (see \cite[A.4.23]{Christensen-book}). Thus, if $X$ is $k$-torsionfree over $R$, then $T\otimes_R X$ is $k$-torsionfree over $A$. If furthermore $T$ is faithfully flat over $R^{\rm op}$, then the above isomorphisms yields that $X$ is $k$-torsionfree over $R$ if $T\otimes_R X$ is $k$-torsionfree over $A$.
\end{proof}

Motivated by \Cref{change-tensor-torsionfree}, we introduced the following definitions.
\begin{definition}\label{def of quasi-faithfully flat extension}
    Let $\varphi\colon R\rightarrow A$ be a ring homomorphism. We say $\varphi$ is a \emph{quasi-faithfully flat extension} if there exists an $A\mbox{-}R$-bimodule $T$ such that:
    \begin{enumerate}
        \item $T$ is a finitely generated projective over $A$;
\item $T$ is faithfully flat over $R^{\rm op}$;
        \item $\Hom_A(T,A)$ is flat over $R$. 
    \end{enumerate}  
The homomorphism $\varphi$ is said to be a \emph{faithfully flat extension} if $A$ is faithfully flat over $R^{\rm op}$ and $A$ is flat over $R$.
\end{definition}
\begin{remark}
(1) A faithfully flat extension is a quasi-faithfully flat extension. The converse of this is not true in general. Let $k$ be a field and $G$ be a finite group. By Example \ref{example-weak-ff}, the canonical map $kG\rightarrow k;~g\mapsto 1$ is a quasi-faithfully flat extension. But this is not a faithfully flat extension if $|G|\geq 2$; 
this is because any faithfully flat extension is injective; see \cite[Theorem 4.7.4]{Lam1999}.


(2) We don't know that whether the definition of (quasi-)faithfully flat extension is left-right symmetric. Specifically, given a ring homomorphism $\varphi\colon R\rightarrow A$, whether the homomorphism $\varphi$ is a (quasi-)faithfully flat extension is equivalent to that $\varphi^{\rm op}\colon R^{\rm op}\rightarrow A^{\rm op}$ is a (quasi-)faithfully flat extension. If one requires that $\Hom_A(T,A)$ is faithfully flat over $R$ in \Cref{def of quasi-faithfully flat extension}, then the definition will be left and right symmetric.
\end{remark}

Besides the faithfully flat extensions, the following gives another class of ring homomorphisms that are quasi-faithfully flat extensions. 
\begin{example}\label{example-weak-ff}
 Let $k$ be a field and $R$ be a Frobenius $k$-algebra; cf. \Cref{def of Frobenius}. Then any $k$-algebra homomorphism $\varphi\colon R\rightarrow A$ is a quasi-faithfully flat extension. 

Indeed, take $T=A\otimes_k R$. Since $k$ is a field, $T$ is free over $A$ and $R^{\rm op}$. In particular, $T$ is faithfully flat over $R^{\rm op}$. Also, we have an isomorphism over both $R$ and $A^{\rm op}$:
\begin{align*}
    \Hom_A(A\otimes_k R,A)& \cong \Hom_k(R,A)\\
    & \cong \Hom_k(R,k)\otimes_k A\\
    & \cong R\otimes_k A,
\end{align*}
where the first isomorphism is due to the adjunction, the second one is due to that $R$ is a free module over $k$ with finite rank, and the last one uses that $R$ is a Frobenius $k$-algebra. It follows that $\Hom_A(T,A)$ is free over $R$, and hence faithfully flat over $R$. Hence, $\varphi$ is a quasi-faithfully flat extension. 
\end{example}

\begin{chunk}
    Let $\mathcal{X}$ be a full subcategory of $\mathsf{mod}(R)$. The category $\mathcal{X}$ is said to be \emph{extension closed} if it satisfies the following property: for each short exact sequence
\[
0 \rightarrow X \rightarrow Y \rightarrow Z \rightarrow 0
\]
in $\mathsf{mod}(R)$, if $X, Z \in \mathcal{X}$, then $Y \in \mathcal{X}$ as well.
\end{chunk}
\begin{proposition}\label{descent of extension closedness}
    Let $\varphi\colon R\rightarrow A$ be a quasi-faithfully flat extension.  For each $k>0$, if $\TF^k(A)$ is extension closed, then so is $\TF^k(R)$. 
\end{proposition}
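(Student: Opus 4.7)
The plan is to reduce the claim to \Cref{change-tensor-torsionfree} by tensoring with the bimodule $T$ supplied by the definition of a quasi-faithfully flat extension. Let $T$ be an $A\mbox{-}R$-bimodule such that $T$ is finitely generated projective over $A$, faithfully flat over $R^{\rm op}$, and $\Hom_A(T,A)$ is flat over $R$.

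Suppose we are given a short exact sequence
\[
0\rightarrow X\rightarrow Y\rightarrow Z\rightarrow 0
\]
in $\mo(R)$ with $X$ and $Z$ belonging to $\TF^k(R)$. First, because $T$ is flat over $R^{\rm op}$, applying $T\otimes_R -$ yields a short exact sequence
\[
0\rightarrow T\otimes_R X\rightarrow T\otimes_R Y\rightarrow T\otimes_R Z\rightarrow 0
\]
in $\mo(A)$. Next, by the forward direction of \Cref{change-tensor-torsionfree}, the hypotheses on $T$ ensure that $T\otimes_R X$ and $T\otimes_R Z$ lie in $\TF^k(A)$. Since $\TF^k(A)$ is assumed to be extension closed, we conclude that $T\otimes_R Y\in \TF^k(A)$ as well.

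Finally, since $T$ is faithfully flat over $R^{\rm op}$, we may invoke the converse direction of \Cref{change-tensor-torsionfree}: the $k$-torsionfreeness of $T\otimes_R Y$ over $A$ forces $Y$ to be $k$-torsionfree over $R$. This verifies that $\TF^k(R)$ is extension closed. The only nontrivial inputs are the two directions of \Cref{change-tensor-torsionfree}, both of which are already available; no additional obstacle is anticipated.
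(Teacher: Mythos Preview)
Your proof is correct and follows essentially the same approach as the paper: both arguments tensor the given short exact sequence with the bimodule $T$ from the definition of a quasi-faithfully flat extension, apply the forward direction of \Cref{change-tensor-torsionfree} to place the outer terms in $\TF^k(A)$, use extension closedness of $\TF^k(A)$ for the middle term, and then invoke the converse direction of \Cref{change-tensor-torsionfree} via faithful flatness to descend back to $\TF^k(R)$.
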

\begin{proof}
 By the assumption, there exists an $A\mbox{-}R$-bimodule such that $T$ is finitely generated projective over $A$ and faithfully flat over $R^{\rm op}$, and $\Hom_A(T,A)$ is flat over $R$. Consider the short exact sequence in $\mo(R)$:
    $$
    0\rightarrow X\rightarrow Y\rightarrow Z\rightarrow 0
    $$
    with $X,Z\in\TF^k(R)$. By \Cref{change-tensor-torsionfree}, both $T\otimes_R X$ and $T\otimes_R Z$ are in $\TF^k(A)$. Since $T$ is flat over $R^{\rm op}$, the above short exact sequence induces a short exact sequence in $\mo(A)$:
    $$
    0\rightarrow T\otimes_R X\rightarrow T\otimes_R Y\rightarrow T\otimes_R Z\rightarrow 0. 
    $$
    Since $\TF^k(A)$ is extension closed, we get that $T\otimes_R Y\in \TF^k(A)$. Again by \Cref{change-tensor-torsionfree}, we have $Y\in \TF^k(R)$, and hence $\TF^k(R)$ is extension closed. 
\end{proof}

\begin{proposition}\label{ascent of extension closedness}
      Let $\varphi\colon R\rightarrow A$ be a finite ring homomorphism.  Assume that $\Gpd_R(A)<\infty$, and  $
    \RHom_R(A,R)\cong P[-n] \text{ in }\D(A) \text{ for some } P\in\proj(A) \text{ and }n\geq 0$. Assume either $n=0$,  or that $R$ is commutative which satisfies $(\widetilde{G}_{n-1})$.  For each $k>0$,  if $\TF^{k+n}(R)$ is extension closed, then so is $\TF^k(A)$.
\end{proposition}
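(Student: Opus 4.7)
The plan is to transport the extension-closedness question from $\TF^k(A)$ down to $\TF^{k+n}(R)$ via \Cref{AD}, the bridge being $M \mapsto \Omega^n_R(M)$ (or the identity when $n = 0$). So I take a short exact sequence $0 \to X \to Y \to Z \to 0$ in $\mo(A)$ with $X, Z \in \TF^k(A)$, and aim to show $Y \in \TF^k(A)$.

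When $n = 0$, \Cref{AD}(1) asserts that $k$-torsionfreeness over $A$ coincides with $k$-torsionfreeness over $R$. Viewing the given sequence in $\mo(R)$ (which is legitimate since $\varphi$ is finite), the assumed extension-closedness of $\TF^k(R) = \TF^{k+n}(R)$ immediately yields $Y \in \TF^k(R)$, and \Cref{AD}(1) applied in the other direction gives $Y \in \TF^k(A)$. When $n \geq 1$ with $R$ commutative satisfying $(\widetilde{G}_{n-1})$, the forward implication in \Cref{AD}(2) — which is precisely where the $(\widetilde{G}_{n-1})$ hypothesis enters — produces $\Omega^n_R(X), \Omega^n_R(Z) \in \TF^{k+n}(R)$. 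Applying the horseshoe lemma to $0 \to X \to Y \to Z \to 0$ over $R$ with compatible projective resolutions and passing to $n$-th syzygies yields a short exact sequence
\[
0 \to \Omega^n_R(X) \to \Omega^n_R(Y) \to \Omega^n_R(Z) \to 0
\]
in $\mo(R)$, where $\Omega^n_R(Y)$ denotes the syzygy coming from the horseshoe construction (which agrees with any other choice up to projective summands). The extension-closedness of $\TF^{k+n}(R)$ then forces $\Omega^n_R(Y) \in \TF^{k+n}(R)$, and the implication $(\ref{333}) \Rightarrow (\ref{111})$ of \Cref{AD}(2) — which does \emph{not} require $(\widetilde{G}_{n-1})$ — delivers $Y \in \TF^k(A)$, as desired.

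No serious obstacle is expected; the proof is essentially a mechanical combination of \Cref{AD} and the horseshoe lemma. The two points requiring care are, first, that syzygies are only defined up to projective summands, which is harmless because $\TF^{k+n}(R)$ is closed under adjoining finitely generated projective summands (since the transpose of a projective is projective and $\Ext^i_R(-,R)$ annihilates projectives); and second, keeping track of which implication in \Cref{AD}(2) does and does not require the condition $(\widetilde{G}_{n-1})$, so that the hypothesis is used exactly where it is needed and nowhere else.
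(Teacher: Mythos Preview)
Your proposal is correct and follows essentially the same approach as the paper's proof: restrict the short exact sequence to $R$, use the Horseshoe lemma to obtain an exact sequence of $n$-th syzygies, apply \Cref{AD} in one direction to place the outer terms in $\TF^{k+n}(R)$, invoke extension closedness, and then apply \Cref{AD} in the other direction to return to $\TF^k(A)$. Your explicit case split and your remark on which implication of \Cref{AD}(2) requires $(\widetilde{G}_{n-1})$ are slightly more detailed than the paper's presentation, but the argument is the same.
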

   
\begin{proof}
Assume $\TF^{k+n}(R)$ is extension closed. For each short exact sequence in $\mo(A)$:
    $$
    0\rightarrow M_1\rightarrow M_2\rightarrow M_3\rightarrow 0
    $$
    with $M_1,M_3\in \TF^k(A)$. Restriction scalars along $\varphi\colon R\rightarrow A$ and combining with the Horseshoe lemma, we have a short exact sequence in $\mo(R)$:
    $$
    0\rightarrow \Omega^n_R(M_1)\rightarrow \Omega^n_R(M_2)\rightarrow \Omega^n_R(M_3)\rightarrow 0.
    $$
    By \Cref{AD}, both $\Omega^n_R(M_1)$ and $\Omega^n_R(M_3)$ are in $\TF^{k+n}(R)$. It follows that $\Omega^n_R(M_2)\in \TF^{k+n}(R)$ as $\TF^{k+n}(R)$ is extension closed. Again by \Cref{AD}, we have $M_2\in \TF^k(A)$, and hence $\TF^k(A)$ is extension closed. 
\end{proof}
\begin{theorem}\label{AD of extension closedness}
     Let $\varphi\colon R\rightarrow A$ be a finite ring homomorphism, where $\varphi$ is a quasi-faithfully flat extension. Assume $\Gpd_R(A)=0$ and $\Hom_R(A,R)$ is projective over $A$. For each $k>0$,  the category $\TF^{k}(R)$ is extension closed if and only if so is $\TF^k(A)$.
\end{theorem}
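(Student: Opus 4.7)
The plan is to derive both implications from the two propositions just proved in this section, viewing the current theorem as the $n = 0$ specialization of the general ascent/descent setup, packaged together with the quasi-faithful-flatness hypothesis.

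First, the implication that extension closedness of $\TF^k(A)$ forces extension closedness of $\TF^k(R)$ is immediate from \Cref{descent of extension closedness}, whose only standing hypothesis is that $\varphi$ be a quasi-faithfully flat extension---precisely what is assumed here. No further verification is needed for this direction.

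For the converse direction, I will apply \Cref{ascent of extension closedness} with $n = 0$. The point to check is that $\RHom_R(A,R) \cong P[0]$ in $\D(A)$ for some $P \in \proj(A)$. Since $\Gpd_R(A) = 0$ means that $A$ is Gorenstein projective over $R$, the formula recalled in \ref{gp} yields $\Ext^i_R(A,R) = 0$ for all $i > 0$; hence $\RHom_R(A,R)$ is represented in $\D(A)$ by the single module $\Hom_R(A,R)$ concentrated in degree zero, and this module is projective over $A$ by hypothesis. With $n = 0$ the condition $(\widetilde{G}_{n-1})$ appearing in \Cref{ascent of extension closedness} is vacuous, and its conclusion says precisely that extension closedness of $\TF^{k+0}(R) = \TF^k(R)$ implies extension closedness of $\TF^k(A)$. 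Thus the theorem requires no new technical input beyond combining the two propositions, and there is no real obstacle to overcome once these are in hand.
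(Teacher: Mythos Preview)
Your proposal is correct and follows exactly the paper's approach: the paper's proof consists of the single sentence ``This follows immediately from \Cref{descent of extension closedness} and \Cref{ascent of extension closedness},'' and you have simply spelled out in detail why the hypotheses of those two propositions are met in the case $n=0$.
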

\begin{proof}
    This follows immediately from \Cref{descent of extension closedness} and \Cref{ascent of extension closedness}.
\end{proof}

By \Cref{example-weak-ff} and \Cref{AD of extension closedness}, we get the following result.
\begin{corollary}
     Let $k$ be a field and $R$ be a Frobenius $k$-algebra and $\varphi\colon R\rightarrow A$ be a finite $k$-algebra homomorphism. Assume $\Gpd_R(A)=0$, $\Hom_R(A,R)$ is projective over $A$. For each $k>0$,  the category $\TF^{k}(R)$ is extension closed if and only if so is $\TF^k(A)$.
\end{corollary}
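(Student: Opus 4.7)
The plan is to reduce the statement immediately to \Cref{AD of extension closedness} by verifying that its hypotheses are satisfied in the present setting. The only hypothesis of \Cref{AD of extension closedness} that is not assumed outright in the corollary is that $\varphi$ be a quasi-faithfully flat extension; the other two conditions ($\Gpd_R(A)=0$ and $\Hom_R(A,R)$ projective over $A$) are given, and the finiteness of $\varphi$ is built into the hypothesis of a finite $k$-algebra homomorphism.

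First I would invoke \Cref{example-weak-ff}, which asserts precisely that any $k$-algebra homomorphism $\varphi\colon R\rightarrow A$ from a Frobenius $k$-algebra $R$ is a quasi-faithfully flat extension, witnessed by the bimodule $T = A\otimes_k R$. Nothing new needs to be proved here; the construction of the required $A\mbox{-}R$-bimodule, together with the verification that $T$ is finitely generated projective over $A$, faithfully flat over $R^{\mathrm{op}}$, and that $\Hom_A(T,A)$ is flat (in fact free) over $R$, is carried out in full in that example, using the Frobenius property $\Hom_k(R,k)\cong R$.

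With that in hand, all the hypotheses of \Cref{AD of extension closedness} are verified, and the desired equivalence between the extension closedness of $\TF^{k}(R)$ and $\TF^{k}(A)$ follows directly. There is no real obstacle: the content of the corollary lies entirely in recognizing that a Frobenius base ring provides, via the standard tensor construction $A\otimes_k R$, a natural bimodule witnessing the quasi-faithful flatness of $\varphi$, so that the general ascent-and-descent result applies verbatim. (One should note that the symbol $k$ in the statement is overloaded, denoting both the base field and the integer indexing the torsionfree condition; in a clean write-up I would rename the integer to avoid this clash.)
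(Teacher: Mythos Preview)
Your proposal is correct and matches the paper's own proof exactly: the paper simply cites \Cref{example-weak-ff} to obtain the quasi-faithfully flat condition and then applies \Cref{AD of extension closedness}. Your remark about the overloaded symbol $k$ is a fair observation, though the paper leaves it as is.
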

The following result is an immediate consequence of \Cref{AD of extension closedness}. The first part was previously established by Zhao \cite[Proposition 4.3]{Zhao2024}.
\begin{corollary}\label{ext-closed-torsionfree}
     Let $R\rightarrow A$ be a Frobenius extension and $n>0$. If $\TF^n(R)$ is extension closed, then so is $\TF^n(A)$. The converse holds if furthermore $A$ is faithfully flat over $R^{\rm op}$.
\end{corollary}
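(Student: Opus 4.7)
The strategy is to deduce the two implications from Propositions~\ref{ascent of extension closedness} and~\ref{descent of extension closedness} separately, rather than invoking Theorem~\ref{AD of extension closedness} as a black box (since the forward direction does not require any quasi-faithful flatness). The preliminary step is to verify the common hypotheses. By definition of a Frobenius extension, $A$ is finitely generated projective over $R$, so $\Gpd_R(A)=0$; moreover, the defining isomorphism $\Hom_R(A,R)\cong A$ of $A\mbox{-}R$-bimodules gives $\RHom_R(A,R)\cong A$ in $\D(A)$, with $A\in\proj(A)$. Thus the hypothesis $\RHom_R(A,R)\cong P[-n]$ of the two propositions is satisfied with $n=0$ and $P=A$.

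For the forward direction, I would apply Proposition~\ref{ascent of extension closedness} with $n=0$. In that case the proposition imposes no commutativity or $(\widetilde{G}_{n-1})$ hypothesis on $R$, and its conclusion is exactly: if $\TF^k(R)$ is extension closed, then so is $\TF^k(A)$. Specializing $k$ to the $n$ of the corollary yields the first implication, with no need of the extra faithful flatness assumption.

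For the converse, I would invoke Proposition~\ref{descent of extension closedness}; what remains is to show that, under the additional hypothesis that $A$ is faithfully flat over $R^{\rm op}$, the map $\varphi$ is a quasi-faithfully flat extension in the sense of Definition~\ref{def of quasi-faithfully flat extension}. I would take $T=A$ viewed as an $A\mbox{-}R$-bimodule and verify the three defining conditions: (i) $T$ is free of rank one, hence finitely generated projective, over $A$; (ii) $T=A$ is faithfully flat over $R^{\rm op}$ by assumption; (iii) $\Hom_A(A,A)\cong A$ as a right $A$-module, with the induced left $R$-action given by $\varphi$, and this is projective over $R$ by the Frobenius property, hence flat. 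These three points make $\varphi$ quasi-faithfully flat with witness $T=A$, and Proposition~\ref{descent of extension closedness} then delivers the descent of extension closedness.

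There is no substantive obstacle here; the proof is an exercise in unwinding the definition of a Frobenius extension and matching it against the hypotheses of the two propositions. The only mildly delicate point is tracking the correct left $R$-module structure on $\Hom_A(A,A)$ (induced from the right $R$-action on $T=A$) when verifying condition~(iii) of Definition~\ref{def of quasi-faithfully flat extension}.
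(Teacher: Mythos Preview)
Your proposal is correct and follows essentially the same route as the paper, which derives the corollary from Propositions~\ref{ascent of extension closedness} and~\ref{descent of extension closedness} (packaged as Theorem~\ref{AD of extension closedness}). Your decision to invoke the two propositions separately is in fact slightly sharper: the paper's one-line reference to Theorem~\ref{AD of extension closedness} nominally assumes quasi-faithful flatness for both directions, whereas you correctly observe that the forward implication needs only Proposition~\ref{ascent of extension closedness} with $n=0$, and the faithful-flatness hypothesis enters only through the witness $T=A$ in Definition~\ref{def of quasi-faithfully flat extension} for the converse.
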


   The next example satisfies the assumption of \Cref{AD of extension closedness}, but $A$ is not projective over $R$. In particular, it is not a Frobenius extension.
\begin{example}\label{Example-not proj}\label{Example-not proj}
    Let $A$ be a commutative Noetherian ring and consider the canonical surjection:
    $$
    \pi\colon R=A\llbracket x\rrbracket /(x^2)\twoheadrightarrow A.
    $$
Note that $A$ is Gorenstein projective over $R$ and $\Hom_R(A,R)\cong A$ as $A$-modules; see for example \cite[Example 4.9]{Liu-Ren}. However, $A$ is not projective over $R$. Consider $T=R$. This is an $A\mbox{-}R$-bimodule. The module $T$ is free over $A$ and faithfully flat over $R$. Moreover, $\Hom_A(R,A)\cong R$ as $R$-modules; see for example \cite[Lemma 3.1]{Ren-SCM}. Thus, $\pi$ is a quasi-faithfully flat extension. 
\end{example}

\begin{chunk}\label{def quasi Gorenstein}
    Let $R$ be a Noetherian ring. Assume that \( k \) is a positive integer and 
\[
0 \rightarrow R \rightarrow I^0 \rightarrow I^1 \rightarrow \cdots \rightarrow I^i \longrightarrow \cdots
\]
is a minimal injective resolution of the module \( R\) over $R$.

(1) The ring \( R\) is called a \emph{left quasi \( k \)-Gorenstein}, as defined by Huang \cite[Definition 2]{Huang-science}, if 
$
\operatorname{fd}_R(I^i) \leq i + 1  \text{ for all } 0 \leq i \leq k - 1;
$
here $\operatorname{fd}_R(-)$ represents the flat dimension over $R$.
Note that the notion of quasi \(k\)-Gorensteinness is not left-right symmetric in general; see \cite[Example 2]{Huang-science}. However, \Cref{symmetric case} provides sufficient conditions under which quasi \(k\)-Gorensteinness becomes symmetric. The ring \(R\) is called \emph{quasi \(k\)-Gorenstein} if both \(R\) and \(R^{\mathrm{op}}\) are left quasi \(k\)-Gorenstein.

(2) Recall from \cite{FGR} that the ring \( R\) is called  a \emph{\( k \)-Gorenstein}  provided that
$
\operatorname{fd}_R(I^i) \leq i \quad \text{for all } 0 \leq i \leq k - 1. 
$
The notion of $k$-Gorenstein rings is left-right symmetric; see \cite[Theorem 3.7]{FGR}. That is, $R$ is $k$-Gorenstein if and only if $R^{\rm op}$ is $k$-Gorenstein.

\end{chunk}

\begin{remark}
Let $\Lambda$ be an Artin algebra. Auslander and Reiten \cite{Auslander-Reiten1994} posed the following question: if $\Lambda$ is $k$-Gorenstein for all $k > 0$, is $\Lambda$ necessarily Iwanaga-Gorenstein? As noted in loc. cit., an affirmative answer would also imply the Nakayama conjecture, which asserts that an Artin algebra with infinite dominant dimension is self-injective. In this context, Iyama and Zhang \cite[Corollary 1.3]{IZ} provided equivalent conditions under which a $k$-Gorenstein Artin algebra for all $k > 0$ is Iwanaga-Gorenstein.
\end{remark}

\begin{chunk}\label{ARH}
Let $R$ be a Noetherian algebra.
 For a positive integer \( k \), the following conditions are equivalent: 

\begin{enumerate}
  \item \( \Omega^i_R(\mo(R)) \) is extension closed for \( 1 \leq i \leq k \);
  \item If 
 $
  0 \to R \to I^0 \to I^1 \to \cdots \to I^i \to \cdots
$
  is a minimal injective resolution of the module \( R \) over $R$, then 
  $
  \operatorname{fd}_R(I^i) \leq i + 1 \quad \text{for } 0 \leq i \leq k - 1;
 $
  \item \( \TF^i(R) \) is extension closed for \( 1 \leq i \leq k \).
\end{enumerate}
The equivalence $(1)\iff (2)$ was established by Auslander and Reiten \cite[Theorem 1.7 and Proposition 2.2]{Auslander-Reiten} for Noetherian rings, and the equivalence $(2)\iff (3)$ was proved by Huang \cite[Theorem 3.3]{Huang1999} for Noetherian algebras. Note that the assumption that $R$ is a Noetherian algebra is needed in the proof of the loc. cit.
\end{chunk}
Combining with \Cref{ARH}, the following result is an immediate consequence of \Cref{AD of extension closedness}.
\begin{corollary}\label{AD-quasi-Gorenstein}
     Let $R$ and $A$ be two Noetherian algebras, and let $\varphi\colon R\rightarrow A$ be a finite ring homomorphism such that $\varphi$ is a quasi-faithfully flat extension. Assume $\Gpd_R(A)=0$ and $\Hom_R(A,R)$ is projective over $A$. For each $k>0$, the ring $R$ is left quasi $k$-Gorenstein if and only if so is $A$. 
\end{corollary}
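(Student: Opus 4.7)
The plan is to combine \Cref{AD of extension closedness} with Huang's characterization of quasi $k$-Gorensteinness via the extension closedness of $\TF^i$ modules, recorded in \ref{ARH}. Specifically, by the equivalence $(2)\iff(3)$ of \ref{ARH} applied to the Noetherian algebra $R$ (resp. $A$), the ring $R$ (resp. $A$) is left quasi $k$-Gorenstein if and only if $\TF^i(R)$ (resp. $\TF^i(A)$) is extension closed for every $1\leq i\leq k$. Thus it suffices to upgrade the single-degree equivalence of \Cref{AD of extension closedness} to all degrees from $1$ through $k$ simultaneously.

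First, I would verify that the hypotheses required to invoke \Cref{AD of extension closedness} at each individual degree $i$ are already in force. Namely, $\varphi\colon R\to A$ is finite, $\varphi$ is a quasi-faithfully flat extension, $\Gpd_R(A)=0$, and $\Hom_R(A,R)$ is projective as a left $A$-module. Since $A$ is Gorenstein projective over $R$ with $\Ext^i_R(A,R)=0$ for all $i\neq 0$ (this is precisely $\Gpd_R(A)=0$ combined with $\Hom_R(A,R)$ being the only nonvanishing Ext, which is the case $n=0$ in the condition $\RHom_R(A,R)\cong P[-n]$), the hypothesis of \Cref{AD of extension closedness} is met with $n=0$.

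Next, for each fixed $i$ with $1\leq i\leq k$, \Cref{AD of extension closedness} (applied with that specific $i$ in place of the generic $k$) gives the biconditional
\[
\TF^{i}(R) \text{ is extension closed} \iff \TF^{i}(A) \text{ is extension closed}.
\]
Assembling these equivalences for all $i$ in the range $1\leq i\leq k$ yields
\[
\TF^{i}(R) \text{ is extension closed for all } 1\leq i\leq k
\iff
\TF^{i}(A) \text{ is extension closed for all } 1\leq i\leq k.
\]
Finally, applying \ref{ARH} once more — to $R$ on the left and to $A$ on the right — translates both sides back into quasi $k$-Gorensteinness, yielding that $R$ is left quasi $k$-Gorenstein if and only if $A$ is, as desired.

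There is essentially no obstacle here: the theorem is a straightforward assembly of \Cref{AD of extension closedness} and Huang's characterization (\ref{ARH}), and the only thing to check is that the hypotheses of \Cref{AD of extension closedness} propagate uniformly through the degrees $1,\dots,k$, which they do since those hypotheses are imposed on $\varphi$ and do not depend on the torsionfree index. The only mild point to keep in mind is that the assumption that both $R$ and $A$ are Noetherian algebras is used genuinely — it is required in order to apply Huang's characterization $(2)\iff(3)$ of \ref{ARH}, which is proved in the setting of Noetherian algebras.
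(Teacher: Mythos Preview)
Your proposal is correct and follows essentially the same approach as the paper, which states that the corollary is an immediate consequence of \Cref{AD of extension closedness} combined with \Cref{ARH}. Your write-up is more explicit about applying \Cref{AD of extension closedness} at each degree $1\le i\le k$ and about the role of the Noetherian algebra hypothesis in invoking Huang's characterization, but the underlying argument is the same.
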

\begin{remark}\label{symmetric case}
Let \(\varphi\colon R \rightarrow A\) be a finite ring homomorphism, where \(R\) is commutative and \(A\) is an \(R\)-algebra via \(\varphi\). Assume that \(\varphi\) is a quasi-faithfully flat extension,  \(\operatorname{Gpd}_R(A) = 0\), and  \(\operatorname{Hom}_R(A, R)\) is projective over $A$ on both sides.
Since \(R\) is commutative, the notion of quasi \(k\)-Gorensteinness for \(R\) is left-right symmetric. Combining with \ref{AD-quasi-Gorenstein}, it follows that for each \(k > 0\), \(A\) is left quasi \(k\)-Gorenstein if and only if it is right quasi \(k\)-Gorenstein. In other words, quasi \(k\)-Gorensteinness for \(A\) is left-right symmetric under these assumptions. It is worth mentioning that, in general, quasi \(k\)-Gorensteinness is not left-right symmetric; see \cref{def quasi Gorenstein}.
\end{remark}

\begin{chunk}
  (1)   Let $R$ and $A$ be two Noetherian algebras, and $\varphi\colon R\rightarrow A$ be a Frobenius extension. If $R$ is left quasi $k$-Gorenstein, then Zhao \cite[Theorem 4.1]{Zhao2024} observed that $A$ is also left quasi $k$-Gorenstein. It is also proved in \cite[Proposition 5.2]{Zhao2024} that the same result holds for $k$-Gorenstein. 
     
  (2) Recall that a module $X$ in $\mo(R)$ is called \emph{generator} if, for any $Y\in \mo(R)$, there exists a surjective $R$-linear map $X^n\rightarrow Y$ for some $n\geq 0$.  For any Frobenius extension $R\rightarrow A$, where $A$ is a generator in both $\mo(R)$ and $\mo(R^{\rm op})$, Chen and Ren \cite[Example 3.8]{Chen-Ren} observed that $R$ is $k$-Iwanaga Gorenstein (c.f. \ref{def of I-Gorenstein}) if and only if $A$ is $k$-Iwanaga Gorenstein. 

   (3) Based on the above, Zhao \cite[Section 5]{Zhao2024} raised the following questions:

   \textbf{Questions:} Let $\varphi\colon R\rightarrow A$ be a Frobenius extension, where $A$ is a generator in both $\mo(R)$ and $\mo(R^{\rm op})$.
   \begin{itemize}
\item For each $k>0$, is $R$ (quasi) $k$-Gorenstein equivalent to that $A$ is (quasi) $k$-Gorenstien?

       \item Is $R$ $k$-Gorenstein for all $k>0$ equivalent to that $A$ is $k$-Gorenstein for all $k>0$?
 \end{itemize}

\end{chunk}

 \Cref{AD-quasi-Gorenstein} yields the following result, which provides an affirmative answer to Zhao's question regarding quasi $k$-Gorenstein when $R$ and $A$ are Noetherian algebras. Note that a projective generator is faithfully flat. 
\begin{corollary}\label{affirmative}
    Let  $R$ and $A$ be Noetherian algebras, and let $\varphi\colon R\rightarrow A$ be a Frobenius extension. Assume $A$ is faithfully flat over $R^{\rm op}$.  For each $k>0$, the ring $R$ is left quasi $k$-Gorenstein if and only if so is $A$. 
\end{corollary}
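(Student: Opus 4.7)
The proof plan is to reduce the statement to Corollary~\ref{AD-quasi-Gorenstein}, which already supplies the desired equivalence between left quasi $k$-Gorensteinness of $R$ and of $A$ once one checks four hypotheses: (i) $\varphi$ is a finite ring homomorphism, (ii) $\varphi$ is a quasi-faithfully flat extension, (iii) $\Gpd_R(A) = 0$, and (iv) $\Hom_R(A,R)$ is projective as a left $A$-module. Since $R$ and $A$ are already assumed to be Noetherian algebras, the algebra hypothesis of Corollary~\ref{AD-quasi-Gorenstein} is automatic.

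Hypotheses (i), (iii), and (iv) are immediate from the definition of a Frobenius extension recalled in~\ref{def of Frobenius}. Indeed, $\varphi$ is finite by definition since $A$ is finitely generated as a module over both $R$ and $R^{\rm op}$; the fact that $A$ is projective as a left $R$-module yields $\Gpd_R(A) = 0$ because every projective module is Gorenstein projective; and the $A$-$R$-bimodule isomorphism $\Hom_R(A,R) \cong A$ shows that $\Hom_R(A,R)$ is projective as a left $A$-module.

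The remaining task is to verify (ii), which is the only place the faithful-flatness assumption enters. Following Definition~\ref{def of quasi-faithfully flat extension}, I would take $T := A$ as the canonical $A$-$R$-bimodule. Then $T$ is trivially finitely generated projective over $A$; $T$ is faithfully flat over $R^{\rm op}$ by the standing assumption on $A$; and $\Hom_A(T, A) = \Hom_A(A,A) \cong A$ is flat as a left $R$-module because $A$ is projective (hence flat) over $R$ by the Frobenius property. Thus $\varphi$ is a quasi-faithfully flat extension.

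With all four hypotheses verified, Corollary~\ref{AD-quasi-Gorenstein} applies directly and gives the equivalence. No substantive obstacle is anticipated: the crux of the argument is simply the observation that, in the Frobenius setting, $T = A$ is the natural choice of witnessing bimodule for the quasi-faithfully flat property, once one additionally knows that $A$ is faithfully flat over $R^{\rm op}$.
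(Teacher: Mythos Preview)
Your proposal is correct and follows exactly the paper's approach: the paper simply remarks that \Cref{AD-quasi-Gorenstein} yields the result, and your argument spells out precisely why its hypotheses are met in the Frobenius setting with $T=A$ witnessing quasi-faithful flatness.
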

\begin{remark}
    Let $\varphi\colon R\rightarrow A$ be a Frobenius extension, where $R$ is a commutative Noetherian ring and the image of $\varphi$ is in the center of $A$. Then \Cref{affirmative} yields that Zhao's question regarding quasi $k$-Gorenstein is true in this case.
\end{remark}
\begin{proposition}\label{main}
    Let $\varphi\colon R\rightarrow A$ be a Frobenius extension. Assume $R$ is commutative and the image of $\varphi$ is in the center of $A$, and $A$ is faithfully flat over $R^{\rm op}$.  For each $k>0$, the ring $R$ is $k$-Gorenstein if and only if so is $A$. 
\end{proposition}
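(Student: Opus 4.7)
The plan is to prove both implications via Auslander's criterion for $k$-Gorensteinness (\cite[Theorem 3.7]{FGR}): a Noetherian ring $S$ is $k$-Gorenstein if and only if $\grade_S \Ext^i_S(M, S) \geq i$ for every $M \in \mo(S)$ and every $1 \leq i \leq k$, where $\grade_S X := \inf\{j : \Ext^j_S(X, S) \neq 0\}$. Once this criterion is in hand, the result will follow from two base-change identities for $\Ext$ together with the faithful flatness hypothesis.

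The two identities I will establish are the following. First, a Frobenius adjunction: for each $N \in \mo(A)$, the $A$-$R$-bimodule isomorphism $A \cong \Hom_R(A, R)$ gives
\[
\Ext^i_A(N, A) \cong \Ext^i_R(N, R)
\]
naturally in $\mo(A)$. Second, a projective base-change formula: for each $M \in \mo(R)$,
\[
\Ext^i_A(M \otimes_R A,\, A) \cong \Ext^i_R(M,\, R) \otimes_R A \quad \text{in } \mo(A).
\]
For the second formula, I will pick a finitely generated projective resolution $P_\bullet \to M$ over $R$, observe that $P_\bullet \otimes_R A$ is a finitely generated projective resolution of $M \otimes_R A$ over $A$ (using that $A$ is projective over $R$), and then apply $\otimes$--$\Hom$ adjunction together with $\Hom_R(P_i, A) \cong \Hom_R(P_i, R) \otimes_R A$, which is valid since each $P_i$ is finitely generated projective over $R$.

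For the forward direction (reproving Zhao \cite[Proposition 5.2]{Zhao2024}), I will iterate the Frobenius adjunction to obtain $\Ext^j_A(\Ext^i_A(N, A), A) \cong \Ext^j_R(\Ext^i_R(N, R), R)$, and conclude that $\grade_A \Ext^i_A(N, A) = \grade_R \Ext^i_R(N, R) \geq i$ whenever $N \in \mo(A)$ and $R$ is $k$-Gorenstein. For the converse, fix $M \in \mo(R)$ and $1 \leq i \leq k$; I will apply the projective base-change formula first to $M$ and then to $X = \Ext^i_R(M, R) \in \mo(R)$, obtaining
\[
\Ext^j_A\bigl(\Ext^i_A(M \otimes_R A, A),\, A\bigr) \cong \Ext^j_R\bigl(\Ext^i_R(M, R),\, R\bigr) \otimes_R A.
\]
Since $R$ is commutative, faithful flatness of $A$ over $R^{\mathrm{op}}$ coincides with faithful flatness over $R$, so the right-hand side vanishes if and only if $\Ext^j_R(\Ext^i_R(M, R), R)$ does. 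This yields
\[
\grade_R \Ext^i_R(M, R) = \grade_A \Ext^i_A(M \otimes_R A, A) \geq i,
\]
the inequality coming from Auslander applied to the $A$-module $M \otimes_R A$. Hence $R$ is $k$-Gorenstein.

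The only real obstacle will be the double application of the projective base-change formula, which rests on $A$ being finitely generated projective over $R$ on both sides, a consequence of the Frobenius hypothesis. Once this is settled, faithful flatness simply transports the Auslander condition from $A$-modules of the form $M \otimes_R A$ back to arbitrary $R$-modules $M$, while the Frobenius duality $A \cong \Hom_R(A, R)$ handles the ascent on the $A$-module side.
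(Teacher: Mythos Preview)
Your proof is correct and takes a genuinely different route from the paper's. The paper argues directly with the defining condition $\operatorname{fd}_R(I^i)\le i$: it takes a minimal injective resolution of $R$, invokes \cite[Theorem 1.2]{Eisenbud1970} to see that $\Hom_R(A,-)$ carries it to a minimal injective resolution of $A$ over $A$, identifies the terms as $A\otimes_R I^i$ via the Frobenius isomorphism, and then compares flat dimensions through a $\Tor$ computation and faithful flatness. You instead pass through Auslander's grade criterion and reduce everything to two base-change identities for $\Ext$. Your approach avoids the delicate point about \emph{minimality} of the induced injective resolution (which is where the paper needs Eisenbud's result and the hypothesis that $A$ is an $R$-algebra), while the paper's approach stays closer to the definition and never touches double $\Ext$.

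Two small points of care. First, the isomorphism $\Ext^i_A(N,A)\cong\Ext^i_R(N,R)$ you obtain from the Frobenius bimodule isomorphism $A\cong\Hom_R(A,R)$ is only $R^{\rm op}$-linear, not $A^{\rm op}$-linear; however this suffices, since a second application of the Frobenius adjunction (on the right, using $A\cong\Hom_{R^{\rm op}}(A,R)$) gives $\Ext^j_{A^{\rm op}}(Y,A)\cong\Ext^j_{R^{\rm op}}(Y,R)$ for $Y\in\mo(A^{\rm op})$, and the double-$\Ext$ identity follows. Second, the version of \cite[Theorem 3.7]{FGR} involving submodules of $\Ext^i_S(M,S)$ is the one stated there; your ``grade of the full $\Ext$'' formulation is equivalent, but for the backward direction you only need it over the commutative ring $R$, where it is immediate since $\grade_R$ is nondecreasing on submodules (via $\grade_R X=\depth(\ann_R X,R)$). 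For the forward direction you can sidestep the issue entirely by checking the submodule condition for $A$ directly: any $X\subseteq\Ext^i_A(N,A)$ restricts to a submodule of $\Ext^i_R(N,R)$, and the Frobenius identity gives $\grade_{A^{\rm op}}X=\grade_R(X|_R)\ge i$.
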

\begin{proof}
The forward direction is due to \cite[Proposition 5.2]{Zhao2024}. For the backward direction, assume $A$ is $k$-Gorenstein. Choose a minimal injective resolution of $R$
  \[
  0 \to R \to I^0 \to I^1 \to \cdots \to I^i \to \cdots
  \]
over $R$. By the proof of \cite[Theorem 1.2]{Eisenbud1970}, the above minimal injective resolution induces a minimal injective resolution
 \[
  0 \to \Hom_R(A,R) \to \Hom_R(A,I^0) \to \Hom_R(A,I^1) \to \cdots \to \Hom_R(A,I^i) \to \cdots.
  \]
  Since $A$ is Frobenius extension over $R$, we have $A\cong \Hom_R(A,R)$  and $\Hom_R(A,I_i)\cong A\otimes_R I_i$ in $\mo(A)$. It follows that we have a minimal injective resolution
$$
0\to A\to A\otimes_R I^0\to A\otimes_R I^1\to \cdots \to A\otimes_R I^i\to \cdots
$$
over $A$.
For each $R$-module $X$ and $n\geq 0$, we have
\begin{align*}
    \Tor_n^A(X\otimes_R A, A\otimes_R I^i)& \cong \Tor_n^R(X\otimes_R A, I^i)\\
    & \cong\Tor_n^R(A\otimes_R X,I^i)\\
    & \cong A\otimes_R \Tor_n^R(X,I^i),
\end{align*}
where the second isomorphism is because $R$ is commutative and the last one is due to that $A$ is flat over $R$. Combining the above isomorphism with that $A$ is faithfully flat over $R^{\rm op}$, we conclude that $\operatorname{fd}_R(I^i)\leq \operatorname{fd}_A(A\otimes_R I^i)$. Since $A$ is $k$-Gorenstein, we get that $R$ is also $k$-Gorenstein.
\end{proof}
\begin{remark}
    (1) \Cref{main} supports Zhao's question regarding $k$-Gorenstein.

    (2) In \cite[Corollary 4.7]{GHZ}, Gu, Huang, and Zhao proved the following: for a Forbenius extension $\varphi\colon R\rightarrow A$, if $\varphi$ is a split monomorphism over $R^{\rm op}$, then, for each $k>0$, the ring $R$ is $k$-Gorenstein if and only if $A$ is $k$-Gorenstein. This also supports Zhao's question regarding $k$-Gorenstein. 
\end{remark}


\section{Finite representation type}\label{Section 6}

In this section, we investigate the finite representation type of $k$-torsionfree modules over a Frobenius extension; see \Cref{asent and descent of finite type of TF}. The result can be applied to the skew group rings; see \Cref{skew-application}.

\begin{chunk}
   (1) A ring homomorphism $R\rightarrow A$ is said to be a \emph{separable} if the map
    $$
   A\otimes_R A\rightarrow A;~ a\otimes b\mapsto ab
    $$
    is a split epimorphism as $A\mbox{-}A$-bimodules.

  (2)  For a ring homomorphism $R\rightarrow A$, it is said to be \emph{split} if $A\cong R\oplus M$ as $R\mbox{-}R$-bimodules for some $R\mbox{-}R$-bimodule $M$. 
\end{chunk}
\begin{example}\label{skew}
    Let $\Lambda$ be a ring and $G$ be a finite group acting on $\Lambda$. Then the skew group ring $\Lambda G$ is a split Frobenius extension over $\Lambda$; see \cite[Lemma 4.5 and Proposition 2.13]{ARS}. Note that the ordinary group rings are special cases of skew group rings.

If, in addition, the order of $G$, denoted by $|G|$, is invertible in $\Lambda$, then $\Lambda G$ is also a separable extension over $R$. Indeed, choose $e=|G|^{-1}\sum_{g\in G}g\otimes g^{-1}\in\Lambda G\otimes_\Lambda\Lambda G$, then for each $a\in \Lambda G$, $ae=ea$, and hence \cite[Lemma 2.9]{Ren-SCM} implies that $\Lambda G$ is separable over $\Lambda$.
\end{example}
\begin{chunk}\label{split-direct summand}
(1) Assume $R\rightarrow A$ is a separable extension. For any module $M$ over $A$, note that $M$ is a direct summand of $A\otimes_R M$ as an $A$-module.

  (2) Assume $R\rightarrow A$ is a split extension. For any module $M$ over $R$, note that $M$ is a direct summand of $A\otimes_R M$; see \cite{Ren-Corrigendum}. 
\end{chunk}

\begin{chunk}
  Following \cite{LW-book}, we say KRS holds for $\mo(R)$ if the Krull-Remak-Schmidt theorem holds for finitely generated left $R$-modules; see \cite[Chapter 1]{LW-book}.
  Note that KRS holds for finitely generated left modules over the following class of rings:
\begin{enumerate}
    \item Left Artinian rings; see \cite[Corollary 19.23]{Lam}.
    \item Commutative Henselian local rings \cite[Theorem 1.8]{LW-book}. For example, any complete local ring is Henselian.
\end{enumerate}
\end{chunk}
Let  $\mathcal X$ be a full subcategory of $\mo(R)$. $\mathcal X$ is said to have \emph{finite representation type} if only finitely many isomorphism classes of indecomposable modules exist in $\mathcal{X}$.
\begin{lemma}\label{add}
   Assume KRS holds for finitely generated modules over $R$. Assume $\mathcal X$ is a full subcategory of $\mo(R)$ that is closed under finite direct sums and direct summands. Then
    $\mathcal X$ has finite representation type if and only if $\mathcal X=\add_R(M)$ for some $M\in\mathcal X$. 
\end{lemma}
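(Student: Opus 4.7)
The plan is to prove both directions by exploiting the Krull--Remak--Schmidt decomposition for finitely generated $R$-modules, together with the two closure properties (finite direct sums and direct summands) of $\mathcal{X}$.

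For the forward direction, I would assume $\mathcal{X}$ has finite representation type and let $M_1, \ldots, M_n$ be a complete list of representatives of the isomorphism classes of indecomposables in $\mathcal{X}$. Setting $M \colonequals M_1 \oplus \cdots \oplus M_n$, closure of $\mathcal{X}$ under finite direct sums gives $M \in \mathcal{X}$, and closure under direct summands gives $\add_R(M) \subseteq \mathcal{X}$. For the reverse inclusion, given any $N \in \mathcal{X}$, the KRS property furnishes a decomposition $N \cong N_1 \oplus \cdots \oplus N_r$ into indecomposables; since each $N_j$ is a direct summand of $N$, it lies in $\mathcal{X}$ and is therefore isomorphic to some $M_{i_j}$, so $N \in \add_R(M)$.

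For the backward direction, I would suppose $\mathcal{X} = \add_R(M)$ for some $M \in \mathcal{X}$ and apply KRS to $M$ itself to write $M \cong M_1^{a_1} \oplus \cdots \oplus M_n^{a_n}$ with the $M_i$ pairwise non-isomorphic indecomposables. Any $N \in \add_R(M)$ is a direct summand of $M^k$ for some $k \geq 1$, and the uniqueness part of KRS applied to $M^k \cong M_1^{ka_1} \oplus \cdots \oplus M_n^{ka_n}$ forces every indecomposable summand of $N$ to be isomorphic to one of the $M_i$. Hence the indecomposables in $\mathcal{X}$ lie among the finite set $\{M_1, \ldots, M_n\}$, so $\mathcal{X}$ has finite representation type.

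Neither direction presents a genuine obstacle; the proof is essentially a bookkeeping exercise given KRS. The only subtlety to keep in mind is to invoke KRS in two distinct guises: existence of an indecomposable decomposition (used in the forward direction to locate the summands of an arbitrary $N \in \mathcal{X}$ inside the finite list) and uniqueness of such a decomposition up to permutation and isomorphism (used in the backward direction to cap the supply of indecomposable summands by those appearing in $M$).
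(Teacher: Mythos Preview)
Your proposal is correct and is exactly the standard KRS argument one expects here; the paper itself does not spell out a proof but simply defers to \cite[Lemma 3.1]{DKLO}, whose argument is the same as yours.
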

\begin{proof}
This follows the same argument as in the proof of \cite[Lemma 3.11]{DKLO}. 
\end{proof}
\begin{proposition}\label{asent and descent of finite type of TF}
     Let $\varphi\colon R\rightarrow A$ be a Frobenius extension. Assume KRS holds for $\mo(R)$ and $\mo(A)$.  Then{\rm :}
     
     (1) Assume $\varphi$ is separable. If $\TF^n(R)$ has finite representation type, then so does $\TF^n(A)$.

(2) Assume $\varphi$ is split. If $\TF^n(A)$ has finite representation type, then so does $\TF^n(R)$.
\end{proposition}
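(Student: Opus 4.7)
The plan is to use Lemma~\ref{add} to reformulate finite representation type as the existence of a single additive generator, and then transfer such a generator along $\varphi$ via the induction functor $A\otimes_R-$ and restriction of scalars. Note that both $\TF^n(R)$ and $\TF^n(A)$ are closed under finite direct sums and direct summands (since $\Tr$ and $\Ext^i(-,R)$ commute with them), so Lemma~\ref{add} is applicable to each.

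First I would record three ingredients specific to a Frobenius extension. Taking $T=A$ in Lemma~\ref{change-tensor-torsionfree}, all hypotheses are immediate: $T$ is trivially projective over $A$, projective over $R^{\rm op}$ by the very definition of a Frobenius extension, and $\Hom_A(A,A)\cong A$ is projective over $R$. Hence $A\otimes_R-$ sends $\TF^n(R)$ into $\TF^n(A)$. Dually, a Frobenius extension satisfies $\Gpd_R(A)=0$ and $\RHom_R(A,R)\simeq A$ in $\D(A)$, so Theorem~\ref{AD}(1) applies and restriction of scalars sends $\TF^n(A)$ into $\TF^n(R)$. Finally, if $\varphi$ is separable then any $X\in\mo(A)$ is an $A$-module direct summand of $A\otimes_R X$ (produced from a bimodule splitting of the multiplication $A\otimes_R A\to A$), and if $\varphi$ is split then any $Y\in\mo(R)$ is an $R$-module direct summand of $A\otimes_R Y$ by \ref{split-direct summand}.

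For (1), assume $\TF^n(R)$ has finite representation type and use Lemma~\ref{add} to choose $M\in\TF^n(R)$ with $\TF^n(R)=\add_R(M)$. I would then set $N\colonequals A\otimes_R M$, which lies in $\TF^n(A)$ by the first ingredient. For any $X\in\TF^n(A)$, restriction places $X$ in $\TF^n(R)=\add_R(M)$, so $A\otimes_R X\in\add_A(A\otimes_R M)=\add_A(N)$; separability then exhibits $X$ as an $A$-summand of $A\otimes_R X$, whence $X\in\add_A(N)$. Thus $\TF^n(A)=\add_A(N)$, and Lemma~\ref{add} yields finite representation type.

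For (2), the argument is symmetric. By Lemma~\ref{add} write $\TF^n(A)=\add_A(N)$, and note that $N\in\TF^n(R)$ after restriction. For any $X\in\TF^n(R)$, the first ingredient gives $A\otimes_R X\in\TF^n(A)=\add_A(N)$, and after restriction this places $A\otimes_R X$ in $\add_R(N)$; splitness then realizes $X$ as an $R$-summand of $A\otimes_R X$, so $X\in\add_R(N)$, giving $\TF^n(R)=\add_R(N)$ as required. I do not foresee a significant obstacle; the only points requiring care are the direct-summand properties for separable and split extensions in the third ingredient, both of which are standard.
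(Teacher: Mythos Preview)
Your proposal is correct and follows essentially the same route as the paper: both arguments use Lemma~\ref{add} to convert finite representation type into the existence of an additive generator, transfer it via $A\otimes_R-$ and restriction (invoking Lemma~\ref{change-tensor-torsionfree} and Theorem~\ref{AD}(1) respectively), and then use the separable/split hypotheses to recover the original module as a direct summand. The only cosmetic difference is that the paper cites Remark~\ref{recover-Zhao} for the restriction step where you cite Theorem~\ref{AD}(1) directly.
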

\begin{proof}
   Note that the category of $n$-torsionfree modules is closed under finite direct sums and direct summands. 

    (1) By assumption and Lemma \ref{add}, $\TF^n(R)=\add_R(M)$ for some $M\in\TF^n(R)$. It follows from \Cref{change-tensor-torsionfree} that $A\otimes_R M$ is in $\TF^n(A)$. Combining again with \Cref{add}, it suffices to show $\TF^n(A)=\add_R(A\otimes_R M)$. As $A\otimes_R M\in\TF^n(A)$ and $\TF^n(A)$ is closed under direct summands, it remains to show that $\TF^n(A)\subseteq \add_A(A\otimes_R M)$. For each $X\in \TF^n(A)$, it follows from \Cref{recover-Zhao} that $X\in \TF^n(R)=\add_R(M)$, and hence $A\otimes_R X\in \add_A(A\otimes_R M)$ by \Cref{change-tensor-torsionfree}. Since $\varphi$ is separable, $X$ is a direct summand of $A\otimes_R X$ as an $A$-module; see \ref{split-direct summand}. Therefore, $X\in \add_A(A\otimes_R M)$, and hence $\TF^n(R)\subseteq \add_A(A\otimes_RM)$. 

    (2) By assumption and Lemma \ref{add}, $\TF^n(A)=\add_A(N)$ for some $N\in\TF^n(A)$.  Since $\varphi$ is Frobenius, it follows that $N\in\TF^n(R)$; see \Cref{recover-Zhao}. It suffices to prove $\TF^n(R)=\add_R(N)$ by \Cref{add}. For each $X\in \TF^n(R)$, it follows from  \Cref{change-tensor-torsionfree} that $A\otimes_R X$ is in $\TF^n(A)$. Thus, $A\otimes_R X\in \add_A(N)$. Rectriction scalars along $\varphi$, we have $A\otimes_R X\in\add_R(N)$. Combining with \ref{split-direct summand}, we conclude that $X\in\add_R(N)$, and hence $\TF^n(R)=\add_R(N)$. 
\end{proof}

\begin{remark}\label{CM-finite result}
  Let $\varphi\colon R\rightarrow A$ be a Frobenius extension. 

  (1)
 For each $M\in\mo(A)$, the module $M$ is Gorenstein projective over $A$ if and only if it is Gorenstein projective over $R$; see \cite{Chen}.  Combining this with  \cite[Lemma 2.3]{Ren-SCM}, the same argument of \Cref{asent and descent of finite type of TF} yields the following: Assume, in addition, $R$ is separable split, and KRS holds for $\mo(R)$ and $\mo(A)$. Then:
$R$ is CM-finite if and only if $A$ is CM-finite;
recall that a ring $R$ is called CM-finite if $\Gproj(R)$ has finite representation type.

(2) It is worth mentioning that there is a result contained in \cite[Theorem B]{Zhao2019}: Assume, in addition, $R$ is a commutative Artinian ring, $A$ is an $R$-algebra via $\varphi$, and $\varphi$ is separable. Then $R$ is CM-finite if and only if $A$ is CM-finite. However, \Cref{counterexample} shows that this result and the result concerning CM-free in \cite[Theorem B]{Zhao2019} need not hold without the split assumption.
\end{remark}

\begin{example}\label{counterexample}
    Consider the canonical projection
    $$
   \pi\colon R\colonequals k\times k\llbracket x,y\rrbracket/(x^2,y^2)\rightarrow A\colonequals k. 
    $$
    This is a Frobenius extension; see  \cite[Remark 4.4]{Zhao2024} or \Cref{AD-Gorenstein dimension}.  Moreover, $\pi$ is a separable Frobenius extension. This is because, for each $r\in k$, we have $$r\otimes_R 1=1\cdot (r,0)\otimes_R 1=1\otimes_R (r,0)\cdot 1=1\otimes_R r$$
    in $A\otimes_R A$, and hence there is a homomorphism
    $$
   \iota\colon A\rightarrow A\otimes_R A;~r\mapsto r\otimes_R 1 
    $$
    of $A\mbox{-}A$-bimodules which satisfies $\epsilon\circ \iota =\id_A$, where $\epsilon\colon A\otimes_R A\rightarrow A; r_1\otimes r_2\mapsto r_1r_2$. 
    
Note that $A = k$ is CM-free, and thus CM-finite.  However, $R$ is not CM-finite. Indeed, there is a surjective map
$
R\rightarrow k\llbracket x,y\rrbracket/(x,y)^2; ~(a,b)\mapsto \overline{b}.
$
It is known that $k\llbracket x,y\rrbracket/(x,y)^2$ is of infinite representation type. This yields that $R$ is not CM-finite as $R$ is self-injective. 
\end{example}

Combining with Example \ref{skew}, the following result is an immediate consequence of \Cref{asent and descent of finite type of TF} and  \Cref{CM-finite result}.
\begin{corollary}\label{skew-application}
    Let $\Lambda$ be a left Artinian ring and $G$ be a finite group acting on $\Lambda$. Assume $|G|$ is invertible in $\Lambda$. Then{\rm :}
\begin{enumerate}
    \item For each $k>0$, $\TF^k(\Lambda G)$ has finite representation type if and only if so does 
    $\TF^k(\Lambda)$.

    \item The skew group ring $\Lambda G$ is CM-finite if and only if so is $\Lambda$.
\end{enumerate}
\end{corollary}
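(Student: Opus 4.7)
The plan is to verify that the extension $\varphi\colon\Lambda\hookrightarrow\Lambda G$ satisfies all the hypotheses of \Cref{asent and descent of finite type of TF} and the CM-finite analogue described in \Cref{CM-fintie result}, and then invoke those results in both directions.

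First I would record from \Cref{skew} that $\Lambda G$ is a split Frobenius extension of $\Lambda$, and that the invertibility of $|G|$ in $\Lambda$ makes $\varphi$ separable via the idempotent $e=|G|^{-1}\sum_{g\in G} g\otimes g^{-1}\in \Lambda G\otimes_\Lambda \Lambda G$. Thus $\varphi$ is simultaneously separable, split, and Frobenius, so both implications of \Cref{asent and descent of finite type of TF} are available.

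Next I would check that the Krull--Remak--Schmidt property holds for both $\mo(\Lambda)$ and $\mo(\Lambda G)$. Since $\Lambda$ is left Artinian, KRS holds for $\mo(\Lambda)$ by the standard fact cited before \Cref{add}. Because $\Lambda G$ is free of finite rank $|G|$ as a left $\Lambda$-module, it is itself left Artinian, so KRS again holds for $\mo(\Lambda G)$. With these preliminaries in place, part (1) follows directly: if $\TF^k(\Lambda)$ has finite representation type, \Cref{asent and descent of finite type of TF}(1) (using separability) gives finite representation type for $\TF^k(\Lambda G)$, and the converse follows from \Cref{asent and descent of finite type of TF}(2) (using splitness). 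For part (2), I would apply the analogous assertion recorded in \Cref{CM-fintie result}(1), whose proof goes through verbatim on replacing $\TF^k$-modules by Gorenstein projective modules and using \cite{Chen} together with \cite[Lemma 2.3]{Ren-SCM} to transfer Gorenstein projectivity across $\varphi$.

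There is essentially no obstacle in this argument; the content is entirely in \Cref{asent and descent of finite type of TF} and \Cref{CM-fintie result}, and the only point requiring a brief verification is that the skew group ring inherits the Artinian property and hence KRS from $\Lambda$, so that the framework of \Cref{add} applies on both sides.
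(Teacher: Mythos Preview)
Your proposal is correct and follows exactly the paper's approach: the paper simply states that the corollary is an immediate consequence of \Cref{asent and descent of finite type of TF} and \Cref{CM-fintie result} combined with Example~\ref{skew}. Your write-up is just a more explicit unpacking of this, including the useful observation that $\Lambda G$ is left Artinian (being finitely generated free over the left Artinian ring $\Lambda$), which the paper leaves implicit but is indeed needed for KRS to hold on $\mo(\Lambda G)$.
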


\bibliographystyle{amsplain}
\bibliography{ref}
\end{document}